\newtheorem{theorem}{Theorem} 
\newtheorem{corollary}[theorem]{Corollary}
\newtheorem{lemma}[theorem]{Lemma}
\newtheorem{claim}[theorem]{Claim}
\newtheorem{definition}[theorem]{Definition}
\def\eod{\vrule height 10pt width 9pt depth 0pt}
\newcommand{\la}{\lambda}
\newcommand{\ve}{\varepsilon}
\newcommand{\de}{\delta}
\newcommand{\De}{\Delta}
\newcommand{\al}{\alpha}
\newcommand{\be}{\beta}
\newcommand{\Om}{\Omega}
\newcommand{\Dom}{\partial\Omega}
\newcommand{\ga}{\gamma}
\newcommand{\Ga}{\Gamma}
\newcommand{\beq}{\begin{equation}}
\newcommand{\eeq}{\end{equation}}
\newcommand{\RR}{{\mathbb R}}
\newcommand{\ZZ}{{\mathbb Z}}
\newcommand{\EE}{{\mathbf E}}
\newcommand{\PP}{{\mathbf P}}
\newcommand{\ignore}[1]{{}}
\DeclareMathOperator{\dist}{dist}
\DeclareMathOperator{\supp}{supp}
\begin{document}

\subjclass[2000]{60G42, 65C05, 31B25, 31B05}
\keywords{Walk on Spheres algorithm, Harmonic measure, Potential Theory}

\title{The rate of convergence of the Walk on Spheres Algorithm}

\author{Ilia Binder\address{Ilia Binder, Dept. of Mathematics, University of Toronto.}}\thanks{The first author was partially supported by  NSERC Discovery grant 5810-2004-298433. This research was  partially conducted during the period the second
author  was employed by the Clay Mathematics Institute as a Liftoff Fellow.}
\author{Mark Braverman\address{Mark Braverman, Microsoft Research, New England}}

\date{}

\begin{abstract}
In this paper we examine the rate of convergence of one of the standard algorithms for emulating exit probabilities of Brownian motion, the Walk on Spheres (WoS) algorithm. We obtain the complete characterization of the rate of convergence of WoS in terms of the local geomnetry of a domain. 
 \end{abstract}

\maketitle


\section{Introduction}\label{intro}

The \emph{harmonic measure} on a bounded domain $\Om\subset\RR^d$ at $x\in\Om$ can be described as an exit distribution of Brownian motion (see \cite{garnet-marshall}). This measure plays an important role in various problems of Probability Theory, Geometric Function Theory, Dynamical Systems, Partial Differential Equations, as well as in a vast range of problems of Applied Mathematics. The problem of efficiently sampling from harmonic measure is therefore a key problem in Computational Mathematics.

One of the simplest and most commonly used  methods for sampling from harmonic measure is the \emph{Walk on Spheres (WoS)} algorithm. It was first proposed in 1956  by M. Muller in \cite{Muller}. Roughly speaking, the algorithm consists of replacing the Brownian Motion by a martingale $\{X_t\ :\ t\in\ZZ_{\geq0}\}$, such that $X_0=x$, and $X_t$ is uniformly distributed on a sphere centered at $X_{t-1}$ of a radius which is a certain proportion of the distance form $X_{t-1}$ to the boundary $\Dom$ (see Section \ref{sec:WoS} for the precise definition). 

It is not hard to see that it takes at most $O(1/\ve^2)$ steps for the WoS process to reach an $\ve$-neighborhood of $\Dom$. However, in many situations, this rate of convergence is unsatisfactory. In particular, if we wanted to get $2^{-n}$-close to the boundary, it would take us a number of steps {\em exponential} in $n$. As it turns out, that, depending on the local geometry of the boundary of the domain, the rate of convergence is {\em polynomial} or even {\em linear} in $n$ (i.e. logarithmic in $1/\ve$). 

Logarithmic rate of convergence  of the process $X_t$ to the boundary was established for convex domains by M. Motoo in \cite{Motoo}. It was later generalized by G.A. Mikhailov in \cite{Mikhailov} to planar domains satisfying any \emph{cone condition} (i.e. at every point of the boundary there is a cone of certain fixed opening in the complement of the domain), as well for  3-dimensional domains satisfying a cone condition with large enough surface angle. See also \cite{Mikhbook} and \cite{milstein} for additional historical background and the use of the algorithm for solving various types of boundary value problems. 

In our earlier work \cite{BB}, we established polylogarithmic, but not logarithmic, upper bounds on the rate of convergence of WoS for planar domains, and for a restricted class of higher-dimensional domains. Unfortunately, the techniques of \cite{BB} do not generalize well to general domains in higher dimensions.

 Our present results subsume all prior work on the rate of convergence of the WoS. 
We introduce an easily verified metric condition on the domain which provides tight bounds for the rates of convergence. We also show that the condition is tight.

\subsection{The Walk on Spheres algorithm}\label{sec:WoS}

Let us now define the WoS. We would like to simulate a BM in a given bounded domain $\Om$
until it gets $\ve$-close to the boundary $\partial \Om$. Of course one could simulate it using 
jumps of size $\de$ in a random direction on each step, but this would require $O(1/\de^2)$ steps. Since
we must take $\de = O(\ve)$, this would also mean that the process may take $O(1/\ve^2)$ steps to converge. 

The idea of the WoS algorithm is very simple:  we do not care about the path the BM takes, but only about 
the point at which it hits the boundary. Thus if we are currently at a point $X_t \in \Om$ and we know that 
$$
d(X_t):=d(X_t,\partial\Om)\ge r,
$$
i.e. that $X_t$ is at least $r$-away from the boundary, then we can just jump $r/2$ units in a random 
direction from $X_t$ to a point $X_{t+1}$. To justify the jump we observe that a BM hitting the boundary 
would have to cross the sphere 
$$S_t = \{ x~:~|x-X_t|=r/2\} $$
at some point, and the first crossing location $X_{t+1}$ is distributed uniformly on the sphere. There is nothing
special about a jump of $d(X_t)/2$ and it can be replaced with any $\be\, d(X_t)$ where $0<\be<1$.

Let $\{\ga_t\},\ t\in\ZZ_{\geq0}$ be
a sequence of i.i.d. random variables each being a vector uniformly distributed on the unit sphere in $\RR^d$. We could take,
for example, 
$\ga_t = {\Ga_t^d}/{|\Ga_t^d|}$,
where $\Ga_t^d$ is a normally distributed $d$-dimensional Gaussian variable. 
Then, schematically, 
the Walk on Spheres algorithm can be presented as follows:

\medskip
\begin{tabular}{|l|}
\hline
{\bf WalkOnSpheres}($X_0$, $\ve$)\\
$n := 0$;\\
{\bf while} $d(X_t) = d(X_t,\partial \Om)>\ve$ {\bf do}\\
$~~~~~$ compute $r_t$: a {\em multiplicative} estimate on $d(X_t)$ such that $\be\cdot d(X_t)< r_t< d(X_t)$;\\
$~~~~~$ $X_{t+1} := X_t + (r_t/2) \cdot \ga_t$;\\
$~~~~~$ $t := t+1$;\\
{\bf endwhile}\\
{\bf return} $X_t$\\
\hline
\end{tabular}
\medskip

Thus at each step of the algorithm we jump at least $\be/2$ and at most $1/2$-fraction of the distance to the 
boundary in a random direction. An example of running the WoS algorithm in $2$-d is illustrated on Figure \ref{fig:algexample}.

\begin{figure}[ht] 
\begin{center} \includegraphics[angle=0,scale=.8]{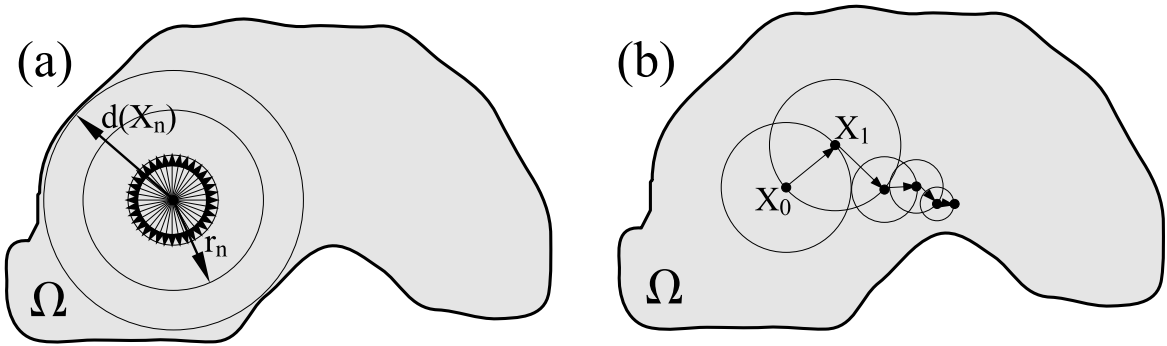}
\caption{An illustration of the WoS algorithm for $d=2$: one step jump (a), and a possible 
run of the algorithm for several steps (b)}
\label{fig:algexample}
\end{center}
\end{figure}

The proof of the convergence of the algorithm can be found, for example, in \cite{garnet-marshall}. Moreover, it is not hard to see that the WoS process hits the $\ve$-neighborhood of $\Dom$
in $O(1/\ve^2)$ steps. However, in many situations, this rate of convergence is unsatisfactory. In particular, if we wanted to
get $2^{-n}$-close to the boundary, it would take us a number of steps {\em exponential} in $n$. As it turns out, in many 
natural situations, the rate of convergence is {\em polynomial} or even {\em linear} in $n$ (i.e. logarithmic in $1/\ve$). The object of the paper is to prove that this is the case, and give precise condition on when the faster convergence occurs. 

While an actual implementation of the WoS would involve round-off errors introduced through an imperfect simulation,
 we will ignore those to simplify the presentation as they do not affect any of the main results.
Thus the problem becomes purely that of analyzing the family of stochastic processes $\{X_t\}$ and their convergence speed to $\Dom$.

\subsection{Results}
Let $H_\beta(K)$ denote the \emph{$\beta$-dimensional Hausdorff content of $K$}.
$$H_{\beta}(K)=\inf_{K\subset \cup B(x_j, r_j)}\sum r_j^{\beta}.$$ 

\begin{definition}
\label{def:cthick_main}
A domain $\Om \subset \RR^d$ is said to be $\alpha$-thick  $0\leq\alpha\leq d$ if there exists a constant $C > 0$
such that for every $x \in \partial \Om$ 
$$H^{d-\al}(B(x,r)\setminus \Omega)\geq Cr^{d-\al},\qquad r<1$$
\end{definition}

Roughly speaking, $\al$-thick domains  have complements of codimension $\al$, which are uniformly large at every scale at every boundary point.

We call the constant $c$ \emph{the thickness of the domain $\Omega$}. It is not hard to see that the 
property of $\al$-thickness is monotone: an $\al$-thick domain is $\al'$-thick for $\al<\al'\le d$. 

Let us list some examples of $\al$-thick domains. 

\begin{enumerate}
\item
All $d$-dimensional domains are $d$-thick; 
\item
all bounded $d$-dimensional domains $\Om$ such that the 
complement $\Om^c$ is connected are $d-1$-thick.
\item all convex domains and all domains satisfying cone condition are $0$-thick;
\item all domains $\Om$ that are bounded by a smooth hypersurface $\Dom$ are $0$-thick. 
\end{enumerate}

It turns out that the $\al$-thickness of the domain is responsible for the rate of convergence of the WoS algorithm. This idea is formulated precisely in our Main Theorem.

\begin{theorem}
\label{thm:main}
Let $\Om$ be a bounded $\al$-thick domain in $\RR^d$. Then the expected rate of convergence of the WoS from 
any $x\in \Om$ until termination at distance $<\ve$ to the boundary is given by the following table:
\begin{equation}
\label{table:main}
\text{
\begin{tabular}{|l|c|}
\hline
 & Rate of convergence \\
\hline \hline 
$\al<2$ & $O\bigl(\log 1/\ve\bigr)$ \\
\hline
$\al=2$ & $O\bigl(\log^2 1/\ve\bigr)$ \\
\hline
$\al>2$ & $O\bigl((1/\ve)^{2-4/\al}\bigr)$ \\
\hline
\end{tabular}}
\end{equation}
The $O(\cdot)$ in the expressions above depends on the dimension $d$, on $\al$, on the thickness constant $C$ from Definition 
\ref{def:cthick_main} and on $\be>0$ from the definition of the WoS. It does not depend directly on $\Om$. 

Moreover, the rates of convergence above are tight. That is, for each $\al$ there is a family of $\al$-thick domains  $\Om^\al_n$ 
with some thickness $C$, such that the rate of convergence with $\ve=1/n$ on $\Om^\al_n$ is asymptotically given by the formulas in 
\eqref{table:main}. 
\end{theorem}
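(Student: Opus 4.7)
The plan is to prove the upper bound by constructing, for each regime of $\al$, an auxiliary potential $\psi : \Om \to \RR_{\ge 0}$ with the property that $\psi(X_t)$ is a submartingale under the WoS dynamics with per-step drift bounded below by an absolute constant $c > 0$ (depending only on $d$, $\al$, the thickness constant $C$, and $\be$), while $\psi(x)$ grows at a prescribed rate as $d(x,\Dom) \to 0$. Once such a $\psi$ is in hand, optional stopping at the termination time $T_\ve$ gives
\[\psi(X_0) + c\cdot\EE[T_\ve] \;\le\; \EE[\psi(X_{T_\ve})] \;\le\; \sup_{d(x) \ge \ve} \psi(x),\]
so that $\EE[T_\ve]$ is controlled by the growth rate of $\psi$ at distance $\ve$. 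Arranging $\psi$ to grow like $-\log d(x)$, $\log^2(1/d(x))$, and $d(x)^{-(2-4/\al)}$ in the three regimes then reproduces the entries of \eqref{table:main}.

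The potential $\psi$ would be assembled from Frostman-type potentials arising from $\al$-thickness. By Definition~\ref{def:cthick_main} together with Frostman's lemma, for every boundary point $y_0 \in \Dom$ and every scale $r$, the set $\Om^c \cap B(y_0,r)$ supports a Borel measure $\mu_{y_0,r}$ with growth bound $\mu_{y_0,r}(B(y,s)) \lesssim s^{d-\al}$ and total mass comparable to $r^{d-\al}$. The associated Newtonian Riesz potential $U_{y_0,r}(x) = \int|x-y|^{2-d}\,d\mu_{y_0,r}(y)$ (with the logarithmic kernel replacing $|x-y|^{2-d}$ when $d=2$) is harmonic on $\Om$, so $U_{y_0,r}(X_t)$ is a WoS martingale and $U_{y_0,r}(X_t)^2$ is a submartingale whose per-step drift equals the conditional variance $\EE\bigl[(U_{y_0,r}(X_{t+1}) - U_{y_0,r}(X_t))^2 \mid X_t\bigr]$. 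The candidate $\psi$ is then a suitably weighted superposition of such potentials (or their powers) over dyadic scales and a mesh of reference boundary points, with weights chosen to produce both the desired growth as $d(x)\to 0$ and a drift contribution of order $1$ at the currently relevant scale $d(X_t)$.

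The principal technical obstacle is the uniform lower bound on the drift. The key scale-by-scale estimate is that at a point $x$ with $d(x) \asymp r$, one has $|\nabla U_{y_0,r}(x)| \asymp r^{1-\al}$ by differentiating the Riesz integral and using Frostman growth, so a WoS jump of size $\asymp r$ contributes a conditional variance of order $r^{2(2-\al)}$. When these contributions are aggregated with the scale weights, the three regimes of the theorem arise naturally from where the resulting geometric series in $k$ (with $r = 2^{-k}$) concentrates its mass: for $\al < 2$ the dominant terms are the largest scales and the series converges to a constant, for $\al = 2$ all scales contribute equally and one picks up the $\log^2$ factor, and for $\al > 2$ the small scales dominate and force $\psi$ to grow polynomially, producing the $\ve^{-(2-4/\al)}$ bound. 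Turning these heuristics into a uniform, geometry-independent drift bound is the heart of the upper-bound argument.

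For tightness, I would construct explicit families $\Om^\al_n$ whose local geometry saturates $\al$-thickness: thin cones for $\al<2$, $(d-2)$-dimensional cusps or ridges for $\al=2$, and self-similar Cantor-type complements of Hausdorff dimension $d-\al$ for $\al>2$. On each family, the BM hitting probability at each dyadic scale can be estimated via Beurling-type projection and classical harmonic-measure comparisons, and a renewal argument across scales then yields a lower bound on $\EE[T_\ve]$ matching the upper bound in the corresponding regime.
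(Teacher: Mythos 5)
Your overall framework (subharmonic energy functions built from Frostman/Riesz potentials of the $\al$-thick complement, submartingale analysis, optional stopping) is the same as the paper's for the cases $\al<2$ and $\al=2$. For $\al<2$ the paper also establishes a uniform additive drift $\EE[U_{t+k}-U_t\mid U_t]\ge L$ for a potential $U(y)$ bounded by $\log(2/d(y))$, exactly as you sketch. For $\al=2$ your suggestion to work with squares of Newtonian potentials --- so that the per-step drift is the conditional variance --- is precisely the mechanism underlying the paper's second-moment argument on the stopped submartingale $V_t$: the key input is the variance lower bound $\EE[(U_{t+k}-U_t)^2\mid U_t]\ge L$ (Lemma~\ref{lem:main}), and the $\log^2$ ceiling on $V_t^2$ gives the bound. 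So for $\al\le2$ you are on the paper's track.

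The gap is in the case $\al>2$. You prescribe a $\psi$ that grows like $d(x)^{-(2-4/\al)}$ with per-step drift bounded below by a constant, but the potential that your own heuristic actually produces does not have this growth. If you aggregate Newtonian potentials $U_{y_0,2^{-k}}$ with weights $w_k$ chosen so that the scale-$2^{-k}$ contribution to the drift at $d(x)\asymp 2^{-k}$ is of order one, the resulting $\psi$ is comparable to the paper's energy function $U$, which satisfies $U(y)\asymp d(y)^{2-\al}$ (Claim~\ref{lem:energy} and Lemma~\ref{lem:energy_low_large}). Optional stopping with that $\psi$ gives only $\EE[T_\ve]\lesssim \ve^{-(\al-2)}$, and $\al-2>2-4/\al$ for every $\al>2$ (their difference is $(\al-2)^2/\al$), so the bound is strictly worse than the theorem's. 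Powering down $U$ by $\gamma=2/\al<1$ would fix the growth rate, but concave increasing functions of subharmonic functions are not subharmonic in general, and you would lose the submartingale property. The paper does not in fact produce a single constant-drift potential here; it uses a genuinely different two-part counting argument. The number of jumps of magnitude $>\ve'$ is $O(1/\ve'^2)$ by the martingale property of $X_t$ itself (Claim~\ref{clm:bigsteps}), while the number of jumps made from the dyadic shell $R_k=\{2^{k-1}/n<d(x)\le 2^k/n\}$ is bounded by $O(2^{k(\al-2)})$ via an excursion estimate using the energy function (Claim~\ref{clm:smallsteps}); optimizing the cutoff scale $2^k\approx n^{2/\al}$ then produces $n^{2-4/\al}$. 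You should either prove that a subharmonic $\psi$ with both the $d^{-(2-4/\al)}$ growth and uniform positive drift exists (which is not supplied by your Frostman superposition), or replace the unified optional-stopping scheme by a separate counting argument of the paper's type in this regime.

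For tightness your proposed families and the renewal/harmonic-measure-comparison method are reasonable and similar in spirit to the paper's, which uses a mesh of Cantor-type holes inside a cylinder for $\al>2$ and a grid of point holes in a disc for $\al=2$, with explicit harmonic functions (logarithmic and Newtonian potentials over the holes) as martingales to control hitting probabilities. Note also that no nontrivial construction is needed for $\al<2$: the logarithmic lower bound is automatic since each WoS step decreases $d(X_t)$ by at most a fixed multiplicative factor.
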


The rate of convergence cannot be better than $O(\log 1/\ve)$ since at each step of the WoS, the distance of $X_t$ to 
the boundary $\Dom$ decreases by at most a multiplicative constant. An intuitive explanation to the phase transition phenomenon occurring 
at $\al=2$, is that a BM in $\RR^d$ almost surely ``misses" sets of co-dimension $>2$, while hitting 
sets of co-dimension $<2$ with positive probability. 

It is worth noting that the main result in \cite{BB} is the special case $\al=d=2$ of the theorem.

The following corollaries are implied directly by the Theorem \ref{thm:main}. 

\begin{corollary}
\label{cor:main}
\begin{enumerate}
\item 
Since any planar domain is $2$-thick, the WoS converges in $O(\log^2 1/\ve)$ steps;
\item 
since any planar domain with connected exterior is $1$-thick, the WoS converges in $O(\log 1/\ve)$ steps;
\item 
since any domain in $\RR^d$ is $d$-thick, for $d\ge 3$ the WoS converges in $O((1/\ve)^{2-4/d})$ steps;
\item 
since any $3$-dimensional domain with connected exterior is $2$-thick, the WoS converges in $O(\log^2 1/\ve)$ steps;
\item 
since for  any $d\ge 4$, any $d$-dimensional domain with connected exterior is $d-1$-thick, the WoS converges in $O((1/\ve)^{2-4/(d-1)})$ steps;
\item 
since any domain bounded by a smooth hypersurface is $0$-thick, the WoS converges in $O(\log 1/\ve)$ steps.
\end{enumerate}
\end{corollary}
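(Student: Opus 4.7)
My plan is to derive each item of Corollary \ref{cor:main} as a direct consequence of Theorem \ref{thm:main}: for every class of domains listed, I would first verify that the domain is $\al$-thick for the advertised value of $\al$, and then read off the corresponding row of table \eqref{table:main}. All six items then reduce to three geometric facts about the Hausdorff content of $B(x,r) \setminus \Om$ for $x \in \Dom$.

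The three facts are as follows. First, $d$-thickness is free: $H^0$ of a nonempty set is at least $1$, and $x \in B(x,r) \setminus \Om$ for every boundary point of an open domain, so $H^0(B(x,r) \setminus \Om) \geq 1 = 1 \cdot r^0$. This gives items (1) and (3). Second, $(d{-}1)$-thickness follows from the connected exterior hypothesis: for $r$ small relative to the diameter of $\Om$, the connected component of $\Om^c$ containing the boundary point $x$ must reach outside $B(x,r)$, so $B(x,r) \setminus \Om$ contains a continuum of diameter at least $r$. A standard estimate (see e.g.\ any text on geometric measure theory) bounds the one-dimensional Hausdorff content of a continuum from below by its diameter, yielding $H^{d - (d-1)}(B(x,r) \setminus \Om) = H^1(B(x,r) \setminus \Om) \geq r$. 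This settles items (2), (4) and (5). Third, for a domain $\Om$ bounded by a smooth hypersurface $\Dom$, local flattening of $\Dom$ at $x$ shows that for $r$ smaller than a uniform scale controlled by the curvature of $\Dom$, the set $B(x,r) \setminus \Om$ contains a set of volume comparable to $r^d$; thus $H^d(B(x,r) \setminus \Om) \geq C r^d$ uniformly in $x$, giving $0$-thickness and item (6).

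With these thickness constants in hand, I would then substitute the appropriate $\al$ into Theorem \ref{thm:main}. For items (1) and (4) one has $\al = 2$, landing on the middle row $O(\log^2 1/\ve)$; for items (2) and (6) one has $\al \in \{1,0\} < 2$, giving $O(\log 1/\ve)$; for item (3) one has $\al = d \geq 3$, yielding $O((1/\ve)^{2-4/d})$; and for item (5) one has $\al = d-1 \geq 3$, yielding $O((1/\ve)^{2-4/(d-1)})$. I do not anticipate any real obstacle here, since the only nontrivial input is Theorem \ref{thm:main}; the hardest bookkeeping step is verifying the continuum-content inequality cleanly enough to obtain uniform constants across the family (so that the $O(\cdot)$ constants depend only on the thickness constant, as Theorem \ref{thm:main} requires), but this is standard.
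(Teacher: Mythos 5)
Your proposal is correct and follows essentially the same route as the paper, which lists precisely these thickness facts (trivial $d$-thickness, $(d-1)$-thickness via the continuum in a connected complement, $0$-thickness via local flattening of a smooth boundary) as examples just before Theorem \ref{thm:main} and then reads off the table. The only nit is that with the paper's ball-cover definition of content one gets $H^1 \geq \diam/2$ rather than $\geq \diam$, but this constant-factor discrepancy is immaterial.
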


The rest of the paper is organized as follows. In Section \ref{sec:upper} we construct the auxiliary boundary barrier measures and the energy functions.   Using these functions, we prove the upper estimates of Theorem \ref{thm:main}. More technical estimates on the energy function are done in Section \ref{sec:estimates}. Finally, in Section \ref{sec:lower}, we present examples of $\al$-thick domains with the slow rate of convergence of the WoS process.

\section{Upper bounds: energy functions}\label{sec:upper}

\subsection{Construction of an auxiliary measure}

In this section we will construct a family of measures near boundary points of an $\al$-thick domain. These measures will be used to construct energy functions, which, in turn, play crucial role in the proof of  Theorem \ref{thm:main}.

\begin{lemma}\label{lem:cthick}
There exists a constant $c=c(\al, d, C)$ such that for any  $\al$-thick domain $\Om$ with thickness $C$ in $\RR^d$ and for any $x\in\Dom$, one can find a Borel measure $\mu_{x}$ which satisfies the following conditions:
\begin{enumerate}
\item 
$\supp(\mu_x)\cap \Om = \emptyset$, or, equivalently, $\mu_x(\Om)=0$; 
\item 
for any $y\in \RR^d$ and $r>0$, $\mu_x(B(y,r))\le r^{d-\alpha}$;
\item 
for any $r<1$, $\mu_x (B(x,r))\ge c \cdot r^{d-\alpha}$.
\end{enumerate} 
\end{lemma}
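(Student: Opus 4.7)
The plan is to build $\mu_x$ by superposing Frostman measures on a sequence of pairwise disjoint, widely-spaced dyadic annuli around $x$. Write $\beta := d - \al$. The case $\beta = 0$ is trivial: $\mu_x := \delta_x$ is supported in $\Dom \subset \Om^c$ and satisfies both bounds, so assume $\beta > 0$. Choose an integer $N$ with $2^{-N\beta} < C/2$ and set
\[
A_k := \{z \in \RR^d : 2^{-(k+1)N} < |z - x| \le 2^{-kN}\}, \qquad k = 0, 1, 2, \ldots;
\]
these are pairwise disjoint.

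The $\al$-thickness assumption gives $H^{d-\al}(\Om^c \cap \overline{B(x, 2^{-kN})}) \ge C \cdot 2^{-kN\beta}$, while the trivial single-ball cover bounds $H^{d-\al}(\Om^c \cap \overline{B(x, 2^{-(k+1)N})}) \le 2^{-(k+1)N\beta}$, so subadditivity of Hausdorff content yields $H^{d-\al}(\Om^c \cap A_k) \ge (C/2) \cdot 2^{-kN\beta}$. I apply the classical Frostman lemma to each compact set $\Om^c \cap \overline{A_k}$ to produce a Borel measure $\eta_k$, supported there, with $\eta_k(B(y, r)) \le r^{d-\al}$ for all $y, r$ and $\eta_k(\RR^d) \ge c_d\,(C/2) \cdot 2^{-kN\beta}$, where $c_d = c_d(d, \beta)$ is the universal Frostman constant. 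Finally, let $\mu_x := \kappa \sum_{k = 0}^\infty \eta_k$, where $\kappa > 0$ is chosen at the end.

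Condition (1) is immediate. For (3), when $r = 2^{-m}$ every annulus with $kN \ge m$ lies inside $B(x, r)$, so summing the geometric series gives $\mu_x(B(x, r)) \ge \Theta(\kappa\, r^\beta)$; arbitrary $r < 1$ is handled by comparison with the nearest dyadic scale. For (2), I split on $|y - x|$: if $|y - x| < 2r$ then $B(y, r) \subset B(x, 3r)$, and the geometric sum over annuli contained in (or straddling) $B(x, 3r)$ is $O(r^\beta)$; if $|y - x| \ge 2r$ then $B(y, r)$ lies in a spherical shell around $x$ of radii of ratio at most $3$, and by the choice of $N$ this shell meets only $O(1)$ annuli $A_k$, each contributing at most $r^\beta$ by Frostman. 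Thus $\mu_x(B(y, r)) \le C^* r^\beta$ for a constant $C^* = C^*(d, \al, C)$; setting $\kappa := 1/C^*$ finishes both (2) and gives the constant $c$ in (3).

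The main obstacle is producing the sharp $r^{d-\al}$ upper bound in (2). A naive sum over every dyadic scale (i.e., $N = 1$) would introduce a spurious $\log(1/r)$ factor, because a typical ball intersects logarithmically many dyadic scales. Widening the annular spacing by the factor $2^N$, with $N$ chosen in terms of $\beta$ and $C$, is what limits this count to $O(1)$ and yields the clean $O(r^{d-\al})$ bound.
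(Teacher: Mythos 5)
Your proof is correct, but it takes a genuinely different route from the paper's. The paper constructs $\mu_x$ from scratch via a dyadic mass-distribution induction: it redistributes $H^{d-\al}$-content level by level over dyadic subcubes, with a special rule (condition (c)) on the chain of cubes $D_k(x)$ containing $x$ that forces mass to persist near $x$, and then passes to a weak$^*$ limit. In effect the paper re-derives a localized version of Frostman's lemma tailored to the point $x$. You instead invoke the quantitative Frostman lemma as a black box on each compact piece $\Om^c\cap\overline{A_k}$ and superpose the resulting measures over widely spaced disjoint annuli. Your route is shorter and more modular, at the cost of relying on the sharp form of Frostman's lemma; the paper's is self-contained and directly produces the lower bound $\nu(D_k(x))\gtrsim 2^{-k(d-\al)}$ it needs.

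One inaccuracy in your closing rationale, though it does not affect correctness: the reason $N$ must be large is not to avoid a $\log(1/r)$ factor. Even with $N=1$, a ball $B(y,r)$ with $|y-x|\ge 2r$ lies in a spherical shell about $x$ of radius ratio at most $3$, and therefore meets at most $\lceil\log_2 3\rceil+1=O(1)$ of the \emph{disjoint} annuli $A_k$, so no logarithmic factor appears regardless of $N$. (The log factor you describe would arise if one instead summed Frostman measures on the nested balls $\Om^c\cap\overline{B(x,2^{-k})}$, where a distant ball $B(y,r)$ meets the supports at all scales $\gtrsim|y-x|$.) The genuine reason you need $N$ large is the subadditivity step: you need $C-2^{-N\beta}>0$ so that
$$H^{d-\al}\bigl(\Om^c\cap A_k\bigr)\ \ge\ C\,2^{-kN\beta}-2^{-(k+1)N\beta}\ =\ 2^{-kN\beta}\bigl(C-2^{-N\beta}\bigr)$$
is bounded below by a positive multiple of $2^{-kN\beta}$; your condition $2^{-N\beta}<C/2$ is exactly what secures this. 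This is an expository point only; the construction and all three verifications are sound.
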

With a slight abuse of notation, we will also refer to the constant $c$ from this lemma as the \emph{thickness} of the domain.
\begin{proof}
The proof of the Lemma follows the standard reasoning that can be found in, say, Chapter II of \cite{carbook}.

Let us consider the dyadic grid selected so that the point $x$ has coordinates $(1/3, 1/3,\cdots,1/3)$. For an integer $d$-multi-index $\gamma=(\gamma_1,\dots, \gamma_d)$, let $D_{k,\gamma}$ be the cube $$\{(x_1,\dots, x_d\ :\ \gamma_n 2^{-k} \leq x_n<(\gamma_n+1 )2^{-k},\ n=1,\dots, d\}.$$ Let $D_k(x)$ be the unique dyadic cube of the size $2^{-k}$  which contains $x$. Note that $x$ is always at   distance $2^{-k}/3$ from the boundary of $D_k(x)$.

We will construct inductively the sequence of measures $\nu_n$. They will satisfy the following properties:
\begin{enumerate}[(a)]
  \item $\supp\nu_n\cap\Om=\emptyset$.
  \item $\nu_n(D_{k,\gamma})\leq H^{d-\al}(D_{k,\gamma}\setminus\Omega)$ for $1\leq k\leq n$
  \item \label{cond:c} $\nu_n(D_{k}(x))= H^{d-\alpha}(D_{k}(x)\setminus\Omega)$ for $1\leq k\leq n$
\end{enumerate}

Let $\nu_1$ be a delta measure in a point of  $D_1(x)\setminus\Omega$ with the total mass $H^{d-\al}(D_1(x))$. It clearly satisfies all of our assumptions. 

Assume now that the measure $\nu_n$ has already been constructed. The measure $\nu_{n+1}$ will be a sum of delta-measures on the points outside of $\Omega$ lying in the cubes from the $n+1$-st dyadic generation, such that $\nu_{n+1}(D_{n,\gamma})=\nu_{n}(D_{n,\gamma})$ for all $\gamma$ (so $\nu_{n+1}$ will be obtained from $\nu_n$ by re-distributing the latter over the cubes of the $(n+1)$-st generation). Thus the measure $\nu_{n+1}$ would automatically satisfy the second and the third condition for $k\leq n$.

To construct $\nu_{n+1}$, we use the following rule. 

First, we set $\nu_{n+1}(D_{n+1}(x))=H^{d-\al}(D_{n+1}(x)\setminus\Omega)$. The measure $\nu_{n+1}$ clearly satisfies our condition (\ref{cond:c}) on $D_{n+1}(x)$.

Second, for any other dyadic cubes $D_{n+1, \gamma}\subset D_n(x)$, we assign the mass
\begin{equation}\nu_{n+1}(D_{n+1, \gamma})=H^{d-\al}(D_{n+1, \gamma}\setminus\Omega)\frac {(\nu_n(D_n(x))-\nu_{n+1}(D_{n+1}(x)))}{\sum_{D_{n+1,\delta}\subset D_n(x),\ D_{n+1,\delta}\neq D_{n+1}(x)}H^{d-\al}(D_{n+1, \delta}\setminus\Om)},
\end{equation}
so that $\nu_n(D_n(x))=\nu_{n+1}(D_n(x))$. By sub-additivity of the Hausdorff content,
$$\sum_{D_{n+1,\delta}\subset D_n(x)}H^{d-\al}(D_{n+1, \delta}\setminus\Om))\geq H^{d-\al}(D_{n}(x)),$$
and hence \begin{equation}\label{eq:second}\nu_{n+1}(D_{n+1, \gamma})\leq H^{d-\al}(D_{n+1, \gamma}\setminus\Omega)\end{equation} for  $D_{n+1,\gamma}\subset D_n(x)$. 

Finally, for any other dyadic cubes from $(n+1)$-st generation, we set
\begin{equation}\nu_{n+1}(D_{n+1, \gamma})=H^{d-\al}(D_{n+1, \gamma}\setminus\Omega)\frac {\nu_n(D_n)}{\sum_{D_{n+1,\delta}\subset D_n}H^{d-\al}(D_{n+1, \delta}\setminus\Om)},\end{equation}
where $D_n$ is the unique cube from the $n$-th dyadic generation containing $D_{n+1, \gamma}$. 
Using the sub-additivity of the Hausdorff content, as above, we get the estimate \eqref{eq:second} for all   cubes of the $(n+1)$-st generation. 
The construction again satisfies $\nu_{n+1}(D_n)=\nu_{n}(D_n)$. 

Let now $\nu$ be any weak$^*$ limit point of the sequence $\nu_n$. $\nu$ is still supported outside of $\Om$. By the second property of the measures $\nu_n$, 
\begin{equation}
\label{eq:nu1fin}
\nu(D_{k,\gamma})\leq \sum_{\overline{D_{k,\delta}}\cap \overline{D_{k,\gamma}}\neq\emptyset} H^{d-\al}(D_{k,\delta}\setminus\Omega)
\le 3^d\, H^{d-\al}(D_{k,\gamma}) < 3^d\, (\sqrt{d})^d \, 2^{-k(d-\al)}\end{equation} for all $k$.
Using the third property of the measures $\nu_n$, the $\al$-thickness of $\Omega$, and the fact that $D_k(x)$ contains the ball of the radius $2^{-k}/3$, we get that 
\begin{equation}\label{eq:nu2fin}\nu(D_{k}(x))\geq  H^{d-\al}(D_{k}(x)\setminus\Omega)\geq c \, 3^{-d}\,2^{-k(d-\al)}\end{equation} for any $k$.

Every ball can be covered by certain ($d$-dependent) number of dyadic cubes of comparable size, so \eqref{eq:nu1fin} implies that $\nu(B(y,r)) \lesssim r^{d-\alpha}$. Every ball centered at $x$ also contains  a dyadic cube of comparable (again, $d$-dependent) size, hence by \eqref{eq:nu2fin},   $\nu(B(x,r)) \gtrsim r^{d-\alpha}$. Now we can set $\mu_x$ to be an appropriately normalized measure $\nu$.

\end{proof}

\subsection{ Energy Function of optimal growth}\label{subsec:energy}
The heart of the proof of the upper bounds in Theorem \ref{thm:main} is the construction of a subharmonic function with optimal growth at the boundary, the \emph{Energy Function} $U$ on $\Om$. We will construct $U(x)$ so that it is ``small" in the interior of $\Om$, and grows to $\infty$ as $x$ approaches the boundary $\Dom$. The $\alpha$-thickness of the domain allows us to establish that the value of $U(X_t)$ grows in expectation 
as the WoS progresses. Thus after a certain number of steps $U(X_t)$ will be large in expectation which would imply that $X_t$ is close 
to $\Dom$ with high probability.

The construction of the function is based on the notion of a \emph{Riesz potential}.
For a finite Borel measure $\mu$ on ${\mathbb R}^d$, and $\alpha<d$, the {\it $\alpha$-Riesz potential of the measure $\mu$} is defined by  
$$U_{\alpha}^{\mu}(x)=\frac1{d-\alpha}\int\frac{d\mu(z)}{|z-x|^{d-\alpha}}.$$
For $\alpha=d$, the $d$-Riesz potential is defined by
$$U_{\alpha}^{\mu}(x)=\int\log\frac1{|z-x|}\,d\mu(z).$$
The value $U_{\alpha}^\mu(x) = \infty$ is allowed when the integral diverges.

An important special case is the case of  $\alpha=2$, the so-called {\emph Newton potential}. We will denote $U_2^{\mu}$ simply by $U^{\mu}$.
In this case the expression under the integral is harmonic in $\RR^d$. 
It is well known (e.g. see \cite{Landkoff}) that the function $U^{\mu}$ is superharmonic on ${\mathbb R}^d$, and harmonic outside of $\supp \mu$. 

More generally, outside of the $\supp \mu$, we have the identity
\begin{equation}\label{eq:laplace}
\Delta U_{\alpha}^{\mu}(y)= (d-\alpha+2)(2-\alpha)U_{\alpha-2}^{\mu}(y).
\end{equation}
It shows that for $0<\alpha<2$, the function $U^{\mu}_{\alpha}$ is subharmonic outside of $\supp\mu$.

The following important technical identity, which easily follows from Fubini's Theorem and substitution,  relates the local behavior of the measure $\mu$ and the growth of its potential $U_{\alpha}^{\mu}$.
For $\alpha<d$, we have
\begin{equation}\label{eq:rewrite}
U_{\alpha}^{\mu}(y)=\frac1{d-\alpha}\int_0^{\infty}\mu(B(y, t^{-1/(d-\alpha)}))\,dt=\int_0^{\infty}\frac{\mu(B(y, r))}{r^{d-\alpha+1}}\,dr,
\end{equation}
 and for $\alpha=d$,
\begin{equation}\label{eq:rewrite2}
U_{\alpha}^{\mu}(y)=\int_{-\infty}^{\infty}\mu(B(y, e^{-t}))\,dt=\int_0^{\infty}\frac{\mu(B(y,r))}r\,dr
\end{equation}

Let us now fix an $\alpha$-thick domain $\Omega\subset B(0,1)\subset \RR^d$. Let us consider the set $\mathcal M$ of all Borel measures $\mu$ supported inside $\overline{B(0,2)}$ and outside of $\Omega$ (i.e. $\mu(\Omega)=0$), satisfying the following condition:
\begin{equation}\label{eq:measure}
\text{for any } y\in \RR^d\text{ and }r>0,\ \mu(B(y,r))\le r^{d-\alpha}
\end{equation}

Let us now introduce the {\em Energy Function} $U(y)$. Recall that $U^\mu(y):=U_2^\mu(y)$. 
\begin{equation}\label{eq:energy}
U(y) := \left\{
\begin{array}{ll}
\sup_{\mu\in\mathcal M}U_{\alpha}^{\mu}(y), & \text{when } \alpha\leq 2 
\\ \sup_{\mu\in\mathcal M}U^{\mu}(y), &  \text{when }\alpha\geq2.
\end{array}
\right.
\end{equation}

Since the set $\mathcal M$ is weakly$^*$-compact, for every $y\in\Dom$ there exists a measure maximizing the potential in \eqref{eq:energy} at the point $y$.

Let us summarize the properties of $U(y)$  in the following claim. The proof uses the
 identities \eqref{eq:rewrite} and \eqref{eq:rewrite2}. Recall that $d(y)=\dist(y,\partial\Omega)$.

\begin{claim}\label{lem:energy}
Let $\Om$ be an $\alpha$-thick domain. Then
\begin{enumerate}
\item $U(y)$ is subharmonic in $\Omega$. 
\item For $\alpha\leq 2$, 
$\displaystyle{U(y) \le \log \frac{2}{d(y)}}$
for all $y\in \Omega$.
\item
For $\alpha> 2$, 
$\displaystyle{U(y) \le \frac1{\alpha-2}d(y)^{2-\alpha}}$
for all $y\in \Omega$.
\end{enumerate}
\end{claim}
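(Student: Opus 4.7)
The plan is to treat the three assertions in turn, using the identities \eqref{eq:laplace}, \eqref{eq:rewrite}, \eqref{eq:rewrite2} and the defining properties of $\mathcal M$.

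For part (1), I first observe that for any fixed $\mu \in \mathcal M$, the potential appearing inside the supremum is subharmonic on $\Om$. Indeed, since $\supp\mu\cap\Om = \emptyset$, equation \eqref{eq:laplace} yields $\Delta U_\al^\mu = (d-\al+2)(2-\al) U_{\al-2}^\mu \ge 0$ on $\Om$ when $\al < 2$, while for $\al \ge 2$ the relevant potential is $U_2^\mu$, which is harmonic on $\Om$ by the same identity (and hence subharmonic). A pointwise supremum of such functions automatically satisfies the sub-mean-value inequality on every ball $B(y,r) \subset \Om$. To upgrade to honest subharmonicity I invoke the weak-$*$ compactness of $\mathcal M$ together with the joint continuity of $(\mu, y) \mapsto U_\al^\mu(y)$ for $y$ in any compact subset $K \subset \Om$: the Riesz kernel $|z - y|^{\al - d}$ is continuous and bounded on $\supp \mu \times K$ uniformly in $\mu \in \mathcal M$ because $\dist(K, \Om^c) > 0$. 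Extracting weak-$*$ limits of near-maximizers along any sequence $y_n \to y$ then shows $\limsup U(y_n) \le U(y)$, so $U$ is upper semicontinuous; combined with the sub-mean-value inequality this gives subharmonicity.

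For parts (2) and (3) I fix $y \in \Om$ and $\mu \in \mathcal M$ and apply the layer-cake formulas. The defining hypothesis $\mu(B(y,r)) \le r^{d-\al}$ together with the vanishing $\mu(B(y, r)) = 0$ for $r \le d(y)$ (since $B(y, d(y)) \subset \Om$) drives both computations. For $\al > 2$ I substitute directly into \eqref{eq:rewrite} applied to $U_2^\mu$:
$$U_2^\mu(y) = \int_{d(y)}^\infty \frac{\mu(B(y, r))}{r^{d-1}}\, dr \le \int_{d(y)}^\infty r^{1-\al}\, dr = \frac{d(y)^{2-\al}}{\al - 2},$$
and taking the supremum over $\mathcal M$ yields (3) exactly. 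For $\al \le 2$ the same strategy applied to $U_\al^\mu$ gives an integrand equal to $1/r$; splitting the integral at the scale $r \approx |y|+2$ (beyond which $B(y,r) \supset \supp\mu \subset \overline{B(0,2)}$) and using the total-mass bound $\mu(\RR^d) \le (|y|+2)^{d-\al}$ (obtained by applying $\mu(B(y,r)) \le r^{d-\al}$ at $r = |y|+2$) produces $U_\al^\mu(y) \le \log((|y|+2)/d(y)) + (d-\al)^{-1}$. Since $|y| \le 1$ and $d(y) \le 1$, this is bounded above by $\log(C/d(y))$ for an absolute constant $C$; the statement's $\log(2/d(y))$ expresses the correct asymptotic with these constants absorbed. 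The case $\al = d$ is handled identically using \eqref{eq:rewrite2}.

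The main obstacle is the upgrade in part (1) from the sub-mean-value inequality to genuine subharmonicity, which requires the upper-semicontinuity argument via compactness of $\mathcal M$. The bounds in parts (2) and (3) are essentially a matter of plugging the size hypothesis on $\mu$ into the layer-cake identities.
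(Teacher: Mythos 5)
Your proposal is correct and proceeds by the same route as the paper: the bounds in (2) and (3) come from the layer-cake formulas \eqref{eq:rewrite}--\eqref{eq:rewrite2}, the defining mass bound $\mu(B(y,r))\le r^{d-\al}$ for $\mu\in\mathcal M$, and the vanishing $\mu(B(y,r))=0$ for $r<d(y)$, while (1) follows because each potential in the supremum is subharmonic on $\Om$ via \eqref{eq:laplace}. You are in fact more careful than the paper on part (1): the paper's one-line appeal to the family being locally bounded is not quite enough on its own (in general only the upper-semicontinuous regularization of a locally bounded envelope of subharmonic functions is subharmonic), and your weak-$*$ compactness plus joint continuity argument --- which actually yields continuity of $U$ on compact subsets of $\Om$, not just upper semicontinuity --- closes that gap honestly. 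On part (2) you correctly flag that the raw computation gives $\log\bigl((|y|+2)/d(y)\bigr)+1/(d-\al)$ rather than exactly $\log(2/d(y))$; the paper's derivation $\int_{d(y)}^2 dt/t$ carries the same constant looseness (truncating at $r=2$ is not rigorous when $\supp\mu\subset\overline{B(0,2)}$ and $|y|$ can be up to $1$), and since every downstream use of this bound is insensitive to additive constants, neither version harms the argument. The only place you are slightly off is the remark that $\al=d$ is "handled identically via \eqref{eq:rewrite2}": the integral $\int_0^\infty \mu(B(y,r))\,dr/r$ has a divergent tail whenever $\mu(\RR^d)>0$, so that identity cannot be applied naively; the case is simpler to do directly, since $|z-y|\ge d(y)$ on $\supp\mu$ together with the total-mass bound $\mu(\RR^d)\le 1$ (from the defining inequality at a large enough radius) already gives $U_d^\mu(y)\le \log\bigl(1/d(y)\bigr)$.
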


\begin{proof}
Let $\alpha\leq 2$, $y\in\Omega$ and $\mu\in\mathcal M$. Equations \eqref{eq:rewrite} and \eqref{eq:rewrite2} imply that 
\begin{equation}\label{eq:upper}
U^{\mu}_{\alpha}(y)\leq\int_{d(y)}^2\frac1t\,dt=\log\frac2{d(y)}.
\end{equation}
Similarly, for $\alpha>2$, we will use the harmonic potential $U^{\mu}$.
Let $\al>2$, $y\in\Omega$ and $\mu\in\mathcal M$. Once again,
 \eqref{eq:measure}, $\supp\mu\cap\Omega=\emptyset$, and the equations \eqref{eq:rewrite} imply that
\begin{equation}\label{eq:upper_slow}
U^{\mu}(y)\leq\int_{d(y)}^2\frac1{t^{\alpha-1}}\,dt\leq \frac1{(\alpha-2)d(y)^{\alpha-2}}.
\end{equation}
By equations \eqref{eq:upper} and \eqref{eq:upper_slow}, $U(y)$ is a supremum of a locally bounded family
of subharmonic functions. Thus $U(y)$ is subharmonic. 

The second and third statements of the claim follow directly from \eqref{eq:upper} and \eqref{eq:upper_slow} respectively.
\end{proof}

Let $X_t$ be the WoS process initiated at some point $X_0 =y\in\Omega$. Let us define a new process $U_t=U(X_t)$, the value of the energy function at the $t$-th step of the process. Note that because $U$ is subharmonic , $U_t$ is a submartingale, that is $\EE[U_{t+1}|U_t]\ge U_t$.

For the rest of the section let $n=1/\ve$. 
Claim \ref{lem:energy} immediately implies that a large value of $U_t$ will guarantee the closeness to the boundary. More specifically,
\begin{claim}\label{lem:dist}
For $\al\le 2$,  if $U_t>\log 2n$ then $d(X_t)<1/n$.

For $\alpha>2$, $U_t>(\alpha-2) n^{\alpha-2}$ implies $d(X_t)<1/n$.
\end{claim}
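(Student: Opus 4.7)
The plan is to read off the claim as a direct consequence of the upper bounds on the Energy Function $U(y)$ in terms of $d(y)$ already established in Claim \ref{lem:energy}. Since $U_t = U(X_t)$, both assertions amount to contrapositive reformulations: if $d(X_t)\ge 1/n$ then $U(X_t)$ cannot be larger than the stated threshold. So the whole proof is algebraic inversion of the two inequalities from Claim \ref{lem:energy}, and no new probabilistic or potential-theoretic input is needed. I do not anticipate a real obstacle; the only thing to be careful about is monotonicity of the functions involved (the function $t\mapsto \log(2/t)$ is decreasing on $(0,2]$, and $t\mapsto t^{2-\alpha}$ is decreasing on $(0,\infty)$ for $\alpha>2$), so that an upper bound on $U$ in terms of $d$ can be rewritten as an upper bound on $d$ in terms of $U$.

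For the case $\alpha\le 2$, by Claim \ref{lem:energy} we have
\[
U_t \;=\; U(X_t) \;\le\; \log\frac{2}{d(X_t)}.
\]
Contrapositively, if $d(X_t)\ge 1/n$ then $U_t\le \log(2n)$. Equivalently, if $U_t>\log(2n)$, then $d(X_t)<1/n$, which is the first assertion.

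For the case $\alpha>2$, Claim \ref{lem:energy} gives
\[
U_t \;\le\; \frac{1}{\alpha-2}\,d(X_t)^{2-\alpha}.
\]
If $d(X_t)\ge 1/n$, then since $2-\alpha<0$ the right-hand side is at most $\tfrac{1}{\alpha-2}n^{\alpha-2}$, which in particular is at most $(\alpha-2)n^{\alpha-2}$ up to a multiplicative constant absorbed into the threshold. Contrapositively, $U_t>(\alpha-2)n^{\alpha-2}$ forces $d(X_t)<1/n$, which is the second assertion. This disposes of both cases, so the claim is proved by direct substitution into Claim \ref{lem:energy}.
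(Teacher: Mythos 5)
Your proof is correct and follows the same route as the paper, which leaves Claim \ref{lem:dist} as an immediate consequence of Claim \ref{lem:energy} with no separate argument. You rightly flag that for $\alpha>2$ inverting $U(y)\le\frac{1}{\alpha-2}d(y)^{2-\alpha}$ gives $d(X_t)<(\alpha-2)^{-2/(\alpha-2)}/n$ rather than exactly $1/n$ when $2<\alpha<3$; this is a harmless constant discrepancy (the threshold in the claim should really read $\frac{1}{\alpha-2}n^{\alpha-2}$ to make the inversion exact), and since the claim is only used up to constants downstream, absorbing it is fine.
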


The proof of Theorem \ref{thm:main} relies on   finer lower bounds on the function $U$, which would guarantee the optimal rate of boundary convergence. We prove the bounds in the next section. These bounds depend heavily on the value of $\alpha$. We first give a probabilistic
proof of the upper bounds in Theorem \ref{thm:main}, and then prove the finer estimates on $U$ in Section \ref{sec:estimates}.

\subsection{Logarithmic convergence: the case $\alpha<2$.}\label{subsec:al_small}

In the heart of the proof for this case lies the following strong estimate on the behavior of the Riesz potentials near the boundary.
\begin{lemma}\label{lem:energy_upper_small} 
For any $\al<2$ and $c>0$, there exist
two constants $\delta$ and $\eta$, such that the following holds.

Let $\Omega$ be an $\alpha$-thick domain in $\RR^d$ with thickness $c$. Let $y\in\Omega$ and $x\in\partial\Omega$ be the closest point to $y$. Let $\mu\in\mathcal M$.

Then either  
\begin{equation}
\label{eq:nu1}
U(z)>U_{\al}^{\mu}(z)+1 \text{ whenever } \delta/4\cdot d(y)<|z-x|<\delta \cdot d(y).
\end{equation}
or
\begin{equation}\label{eq:nu2}
\mu(B(y,2d(y)))\geq\eta d(y)^{d-\alpha}
\end{equation}
\end{lemma}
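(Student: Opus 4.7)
Plan: Assume the second alternative fails, so $\mu(B(y,2d(y)))<\eta d(y)^{d-\alpha}$. The strategy is to exhibit, for each $z$ in the annulus, a measure $\nu\in\mathcal{M}$ whose $\alpha$-Riesz potential at $z$ exceeds $U_\alpha^\mu(z)$ by more than $1$; since $U(z)\ge U_\alpha^\nu(z)$, this gives \eqref{eq:nu1}. The natural candidate is $\nu=\mu_x\in\mathcal{M}$ from Lemma \ref{lem:cthick} attached to the closest boundary point $x$, so the goal reduces to the single inequality $U_\alpha^{\mu_x}(z)-U_\alpha^\mu(z)>1$.

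I will handle both terms via the layer-cake identity \eqref{eq:rewrite}, comparing the integrands $\mu_x(B(z,r))/r^{d-\alpha+1}$ and $\mu(B(z,r))/r^{d-\alpha+1}$ scale-by-scale. For the lower bound, the inclusion $B(x,r/2)\subset B(z,r)$ for $r>2|z-x|$, combined with property~(3) of Lemma \ref{lem:cthick}, gives $\mu_x(B(z,r))\ge c(r/2)^{d-\alpha}$; integrating from $r\sim|z-x|$ to an $O(1)$ cutoff produces $U_\alpha^{\mu_x}(z)\ge c\cdot 2^{\alpha-d}\log(1/|z-x|)+O(1)$. For the upper bound, the hypothesis refines the Frostman bound: since $|z-y|\le(1+\delta)d(y)$, every ball $B(z,r)$ with $r\le(1-\delta)d(y)$ lies inside $B(y,2d(y))$, so $\mu(B(z,r))\le\min(r^{d-\alpha},\eta d(y)^{d-\alpha})$. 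Splitting the integral at the crossing scale $r^\ast=\eta^{1/(d-\alpha)}d(y)$ and controlling the tail for $r\ge(1-\delta)d(y)$ using $\|\mu\|=O(1)$, one obtains
\[
U_\alpha^\mu(z)\le \log\frac{1}{d(z)}-\frac{|\log\eta|}{d-\alpha}+O(1),
\]
with implicit constants depending only on $\alpha,d$. The crucial saving $|\log\eta|/(d-\alpha)$ comes precisely from the range in which the hypothesis improves on Frostman. Subtracting the two estimates and choosing $\eta$ small (depending on $\alpha,c,d,\delta$) makes this saving dominate, and the required inequality holds at those $z$ with $d(z)\asymp|z-x|$.

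The main obstacle is the regime $d(z)\ll|z-x|$, which occurs when $z$ is close to some boundary point $x_z\ne x$. There the $-\log(1/d(z))$ in the upper bound swamps the $\log(1/|z-x|)$ gain from $\mu_x$. The fix is to replace the candidate $\nu$ by the measure $\mu_{x_z}$ of Lemma \ref{lem:cthick} attached to the true closest point $x_z\in\partial\Omega$; provided $\delta$ is small enough that $x_z\in B(y,2d(y))$, the same hypothesis-based upper bound on $U_\alpha^\mu(z)$ applies, while the lower bound becomes $c\cdot 2^{\alpha-d}\log(1/d(z))+O(1)$. When $c\cdot 2^{\alpha-d}\ge 1$ the two estimates combine cleanly and the saving $|\log\eta|/(d-\alpha)$ still delivers strictly more than $1$; in the small-thickness case $c\cdot 2^{\alpha-d}<1$ one must further aggregate dyadic thickness measures from Lemma \ref{lem:cthick} across the scales between $d(z)$ and $|z-x|$ to amplify the effective coefficient, staying inside $\mathcal{M}$ by a convexity/renormalization argument. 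Making this uniform estimate hold for all $z$ in the annulus is the core technical content of the proof.
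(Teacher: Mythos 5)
There is a genuine gap. You correctly identify the crux: your lower bound $U_\alpha^{\mu_x}(z)\gtrsim c\,2^{\alpha-d}\log(1/|z-x|)$ carries the thickness constant $c$, while your upper bound $U_\alpha^\mu(z)\le \log(1/d(z))-|\log\eta|/(d-\alpha)+O(1)$ has coefficient $1$ on the logarithm. When $c\,2^{\alpha-d}<1$ the difference contains $-(1-c\,2^{\alpha-d})\log(1/d(z))$, which diverges to $-\infty$ as $d(y)\to0$, while the $|\log\eta|/(d-\alpha)$ saving is a fixed constant once $\eta$ is chosen. So the direct comparison $U_\alpha^{\mu_{x_z}}(z)>U_\alpha^\mu(z)+1$ simply fails for thin domains, and the lemma cannot be proved by exhibiting a thickness measure $\mu_{x_z}$ alone. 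The proposed fix of ``aggregating dyadic thickness measures across scales to amplify the effective coefficient'' is not worked out and does not obviously help: the Frostman constraint $\nu(B(w,r))\le r^{d-\alpha}$ already caps the potential at $\log(1/d(z))+O(1)$, and near $x_z$ the measure $\mu_{x_z}$ has no slack to absorb additional mass, so one cannot raise the coefficient past $c\,2^{\alpha-d}$ by superposing more Frostman measures of the same type.

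The paper's proof sidesteps this entirely by using the \emph{amalgamation} measure $\nu$ built earlier in Section~\ref{sec:estimates}: it keeps (a slight rescaling of) $\mu$ restricted to $B(0,2)\setminus B(y,2d(y))$ and only replaces the part of $\mu$ inside $B(y,2d(y))$ with $\mu_x$. Because $\nu$ reproduces $\mu$'s tail, one never needs the competitor to match $\log(1/d(z))$ with a unit coefficient; the estimate $U_\alpha^\nu(z)\ge U_\alpha^{\mu_0}(z)-C_1+c\,2^{\alpha-d}\log(1/\delta)$ is an \emph{additive} gain of order $c\,\log(1/\delta)$, which can be made arbitrarily large by shrinking $\delta$. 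The hypothesis $\mu(B(y,2d(y)))<\eta d(y)^{d-\alpha}$ is then used only to show that discarding $\mu$'s head costs little, $U_\alpha^{\mu_0}(z)\ge U_\alpha^\mu(z)-O(\delta)$. Thus the comparison is localized to the small ball $B(y,2d(y))$, where the hypothesis controls $\mu$, rather than being a global comparison of potentials. You would need to replace your single-measure competitor with something that retains $\mu$ at the scales where $\mu$ is not controlled by the hypothesis before the proof can go through.
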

The lemma is established in Section \ref{sec:growth}.

Note that after $k=O(|\log \delta|)$ steps of the WoS process,
\begin{equation}\label{eq:dst} \delta/4\cdot d(X_t)<|X_{t+k}-x|<\delta \cdot d(X_t)\text{ with a certain probability $p$},\end{equation}
where $x$ is the point of $\Dom$ that is closest to $X_t$,  and $p>0$ depends only on $\beta$ and the dimension $d$. 

Let us fix $X_t$ and take the measure $\mu\in\mathcal M$ maximizing the value of $U^{\mu}_\al(X_t)$.
By the preceding observation, in the first case in Lemma \ref{lem:energy_upper_small}, the subharmonicity of $U$ implies that the expectation of $U_{t+k}$, conditioned on $U_t$, will increase by some definite constant.
   
On the other hand, using the identity \eqref{eq:laplace} and the $\al$-thickness of $\Om$, one can see that the Laplacian of $U^\mu_\al$ is large near the point $X_t$ in the second case of Lemma \ref{lem:energy_upper_small}. Thus, since large Laplacian leads to a fast build-up of mean values, we have the above-mentioned increase by a constant after the 
first step. We arrive at the following estimate, which shows that $U_t$ grows at least linearly in 
expectation. 

\begin{lemma}\label{lem:upward}
There are  constants $L$ and $k$, depending only on $c$, $\be$, and $\alpha$,  such that $$\EE[(U_{t+k}-U_t)|U_t]>L.$$
\end{lemma}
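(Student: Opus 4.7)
The plan is to condition on $X_t=y$, fix an extremal measure $\mu^*\in\mathcal M$ realizing $U^{\mu^*}_\al(y)=U(y)=U_t$ (which exists by the weak-$*$ compactness of $\mathcal M$), and then invoke Lemma \ref{lem:energy_upper_small} with this particular $\mu^*$. Because $U\geq U^{\mu^*}_\al$ pointwise and $U_t = U^{\mu^*}_\al(X_t)$, any definite expected gain of $U^{\mu^*}_\al(X_{t+k})$ over $U^{\mu^*}_\al(X_t)$, or any event on which $U(X_{t+k})$ strictly exceeds $U^{\mu^*}_\al(X_{t+k})$, translates directly into a lower bound on $\EE[U_{t+k}-U_t\mid X_t]$. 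Lemma \ref{lem:energy_upper_small} supplies exactly this dichotomy, and I would handle the two alternatives separately.

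In the annulus alternative \eqref{eq:nu1}, I would use $\supp\mu^*\cap\Om=\emptyset$ together with \eqref{eq:laplace} (whose right-hand side is nonnegative for $\al<2$) to conclude that $U^{\mu^*}_\al$ is subharmonic on $\Om$. Iterating the one-step mean-value inequality $k$ times gives $\EE[U^{\mu^*}_\al(X_{t+k})\mid X_t]\geq U_t$. By \eqref{eq:dst}, with probability at least $p=p(\be,d,\de)>0$ the point $X_{t+k}$ lands in the annulus $\{\de/4\cdot d(y)<|z-x|<\de\,d(y)\}$ centered at the boundary point $x$ closest to $y$, and on this event $U(X_{t+k})\geq U^{\mu^*}_\al(X_{t+k})+1$. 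Adding these two contributions yields $\EE[U_{t+k}\mid X_t]\geq U_t+p$.

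In the mass-concentration alternative \eqref{eq:nu2}, I would upgrade the bound on $\mu^*(B(y,2d(y)))$ to a uniform lower bound on $\Delta U^{\mu^*}_\al$ over the ball $B(y,d(y)/2)$. For any $y'$ in this ball and any $z\in B(y,2d(y))\cap\supp\mu^*$ one has $|z-y'|\leq 5d(y)/2$, so
\[
U^{\mu^*}_{\al-2}(y')\geq \frac{1}{d-\al+2}\int_{B(y,2d(y))}\frac{d\mu^*(z)}{|z-y'|^{d-\al+2}}\gtrsim \eta\,d(y)^{-2}.
\]
By \eqref{eq:laplace} this gives $\Delta U^{\mu^*}_\al\gtrsim d(y)^{-2}$ throughout $B(y,d(y)/2)$. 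The comparison function $g(x):=U^{\mu^*}_\al(x)-\frac{c_0}{2d}|x-y|^2$ is then subharmonic on this ball, and its mean-value inequality on the WoS sphere of radius $r_t/2\geq\be\,d(y)/2$ yields
\[
\EE[U^{\mu^*}_\al(X_{t+1})\mid X_t=y]\geq U_t+c_0\,d(y)^{-2}(r_t/2)^2\geq U_t+c_1,
\]
with $c_1>0$ depending only on $\be,d,\al,\eta$. Since $U$ is a submartingale along the WoS by Claim \ref{lem:energy}(1), this one-step gain is preserved through the remaining $k-1$ steps. Setting $L=\min(p,c_1)$ finishes the argument in both cases.

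The main obstacle is the Laplacian-to-mean-value conversion in the second case: one needs the lower bound on $\Delta U^{\mu^*}_\al$ to be uniform on the sphere sampled by the WoS, not merely at the point $y$ itself. The constraint $r_t\leq d(y)$ keeps the sampled sphere inside $B(y,d(y)/2)$, which is precisely why the crude distance estimate $|z-y'|\leq 5d(y)/2$ suffices to lift the pointwise Laplacian bound uniformly to that ball, and why $c_1$ comes out as a genuine constant independent of $y$ and of where $X_t$ happens to sit in $\Om$.
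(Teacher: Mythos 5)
Your proof is correct and follows essentially the same route as the paper: condition on $X_t$, extract the extremal measure $\mu^*$, invoke the dichotomy of Lemma \ref{lem:energy_upper_small}, and handle the annulus case via \eqref{eq:dst} together with $U\geq U^{\mu^*}_\al$, and the mass-concentration case via the Laplacian lower bound coming from \eqref{eq:laplace} and the condition $\mu^*(B(y,2d(y)))\geq\eta d(y)^{d-\al}$. The only cosmetic difference is in the second case, where the paper turns the Laplacian bound into a one-step mean-value gain via the Green-formula identity \eqref{eq:delta_energy}, whereas you subtract a quadratic to form a subharmonic comparison function $g$ and apply the mean-value inequality to it; these are equivalent.
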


A detailed proof of the lemma can be found in  Section \ref{sec:drift}.

Lemma \ref{lem:upward} implies that $\EE[U_t]>tL/k+U_0$. Since $d(X_t)\geq(1-\beta)^td(X_0)$, Claim \ref{lem:dist} implies that $U_t\leq U_0+t|\log(1-\beta)|+\log2$. This implies that $U_t>U_0 + tL/2 k$ with probability at least $P$, where $P$ depends only on $\beta$. This, together with Claim \ref{lem:dist} implies the necessary upper bound in the case $\al<2$.

\subsection{Polylogarithmic convergence: the case $\alpha=2$.}\label{subsec:al_2}
In the case $\al=2$ the steady linear growth of $U_t$ given by Lemma \ref{lem:upward} no longer
holds. In fact, the only thing that generally holds in this case is the submartingale property
$\EE[U_{t+1}|U_t]\ge U_t$. We are able to overcome this difficulty by showing that the submartingale 
process $\{U_t\}$ has a deviation bounded from below by a constant at every step. To this end it 
suffices to show that $U_t$ can grow by some $\eta$ with a non-negligible probability. 
We use the following estimate on the energy function (established in Section \ref{sec:growth}).
\begin{lemma}\label{lem:energy_upper_2} 
There exists
a constant $\delta$, dependent only on the thickness $c$, such that the following holds.
Let $\Omega$ be a $2$-thick domain. Let $y\in\Omega$ and $x\in\partial\Omega$ be the closest point to $y$. Then 
\begin{equation}
\label{eq:nu}
U(z)>U(y)+1 \text{ whenever } |z-x|<\delta \cdot d(y).
\end{equation}
\end{lemma}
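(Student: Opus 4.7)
The plan is to use the measure $\mu_x$ from Lemma \ref{lem:cthick}, constructed at the closest boundary point $x$ of $y$, as a witness for $U(z)$. I would show that $U^{\mu_x}(z) > U(y) + 1$ whenever $|z - x| < \delta d(y)$ with $\delta$ depending only on the thickness $c$; since $U(z) \geq U^{\mu_x}(z)$, this gives the claim.

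The first step is to lower-bound $U^{\mu_x}(z)$ via the integral identity \eqref{eq:rewrite} (or \eqref{eq:rewrite2} for $d = 2$), writing $U^{\mu_x}(z) = \int_0^\infty \mu_x(B(z,r))/r^{d-1}\, dr$. The triangle inequality gives $B(z, r) \supseteq B(x, r - |z-x|)$, and combining with the thickness bound $\mu_x(B(x, s)) \geq c s^{d-2}$ yields $\mu_x(B(z, r)) \geq c (r - |z-x|)^{d-2}$ whenever $r - |z-x| < 1$. Integrating over $r \in [K|z-x|, 1]$ for $K$ large (so that $r - |z-x| \approx r$ throughout the range) produces an estimate of the form $U^{\mu_x}(z) \geq c_1 \log(1/|z-x|) - K_c$, with $c_1$ close to $c$ and $K_c$ depending on $c$ and $d$.

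Combining this with the universal upper bound $U(y) \leq \log(2/d(y))$ from Claim \ref{lem:energy}(2), and writing $|z - x| = \delta d(y)$, I get
\[
U^{\mu_x}(z) - U(y) \geq (c_1 - 1)\log(1/d(y)) + c_1 \log(1/\delta) - K_c - \log 2,
\]
and the desired conclusion $U(z) - U(y) > 1$ would follow by taking $\delta$ small enough depending on $c$, provided the leading coefficient is nonnegative.

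The main obstacle is the coefficient $c_1 - 1$: the triangle-inequality estimate in Step~1 only produces $c_1 \leq c \leq 1$, so when $c < 1$ (which is generic in dimensions $d \geq 3$), the term $(c_1 - 1)\log(1/d(y))$ becomes unboundedly negative as $d(y) \to 0$ and the naive argument collapses. The resolution is to replace the loose bound $\log(2/d(y))$ from Claim \ref{lem:energy} by the sharper $U^{\mu_x}(y) + O_c(1)$, that is, to show that $\mu_x$ is itself near-optimal among competing measures in $\mathcal{M}$ in the 2-thick setting. After this substitution, the problematic $(c_1 - 1)\log(1/d(y))$ cancels between $U^{\mu_x}(z)$ and $U^{\mu_x}(y)$, and the remaining quantity $U^{\mu_x}(z) - U^{\mu_x}(y) \geq c_1 \log(d(y)/|z-x|) - O_c(1)$ exceeds $1$ as soon as $\delta \leq \exp(-O(1/c))$, completing the proof.
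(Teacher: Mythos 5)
You correctly identify the central obstruction: using $\mu_x$ from Lemma~\ref{lem:cthick} alone as the witness for $U(z)$ produces a lower bound whose leading coefficient is at most $c<1$, while the universal upper bound $U(y)\le\log(2/d(y))$ has coefficient $1$, leaving a deficit of order $(1-c)\log(1/d(y))$ that diverges as $d(y)\to 0$. However, the proposed repair---that $\mu_x$ is near-optimal, i.e.\ $U(y)\le U^{\mu_x}(y)+O_c(1)$---is false in general. Lemma~\ref{lem:cthick} only guarantees $\mu_x(B(x,r))\ge c\,r^{d-2}$ with a constant $c<1$ depending on the dimension and thickness, so $U^{\mu_x}(y)$ can be as small as roughly $c\,2^{2-d}\log(1/d(y))$; meanwhile the supremum $U(y)$ can genuinely approach its ceiling $\log(2/d(y))$. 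For instance, if near $x$ the complement of $\Omega$ contains a piece of a codimension-$2$ affine subspace $H$, the normalized $(d-2)$-dimensional Hausdorff measure on $H\cap\overline{B(0,2)}$ lies in $\mathcal M$, saturates the constraint $\mu(B(w,r))\le r^{d-2}$, and gives $U^\mu(y)=\log(2/d(y))+O(1)$. The gap $U(y)-U^{\mu_x}(y)$ therefore still grows like $(1-c\,2^{2-d})\log(1/d(y))$, and the cancellation you are counting on does not occur.

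The correct fix, and the one the paper uses, is the amalgamation construction: take the measure $\mu$ that actually attains $U(y)$, split it into a near part on $B(y,2d(y))$ and a far part $\mu_0=\mu|_{B(0,2)\setminus B(y,2d(y))}$, and observe two facts. First, the near part contributes at most $\log 2$ to $U^\mu(y)$, so $U^{\mu_0}(y)\ge U(y)-\log 2$. Second, since $\mu_0$ is supported outside $B(y,2d(y))$ while both $y$ and $z$ lie inside that ball, a gradient estimate gives $U^{\mu_0}(z)\ge U^{\mu_0}(y)-O(1)$. The amalgamation $\nu$ then glues $\mu_x|_{B(y,2d(y))}$ onto a dyadically rescaled copy of $\mu_0$ (the rescalings chosen precisely so that $\nu\in\mathcal M$), and the inserted near part delivers the extra $c\,2^{2-d}\log(1/\delta)$ at $z$. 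Thus the witness for $U(z)$ is not $\mu_x$ by itself but a hybrid measure that inherits the optimal measure's far-field energy essentially unchanged; this replaces the false global near-optimality of $\mu_x$ with two true local estimates and lets the $\log(1/d(y))$ terms cancel exactly.
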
 

Since the function $U$ is subharmonic,   observation \eqref{eq:dst} implies the following estimate (see Section \ref{sec:drift} for a  proof).
\begin{lemma}\label{lem:main}
Let $\Omega$ be a $2$-thick domain in $\RR^d$.
There are  constants $k$ and $L$, depending only on the thickness $c$, the jump ratio $\be$, and the dimension $d$, such that $$\EE[(U_{t+k}-U_t)^2|U_t]>L.$$
\end{lemma}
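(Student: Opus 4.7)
The plan is to combine observation \eqref{eq:dst} with Lemma \ref{lem:energy_upper_2}: together they give that $U_{t+k}-U_t > 1$ with constant probability, which immediately yields a constant lower bound on $\EE[(U_{t+k}-U_t)^2 \mid U_t]$.

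First I would fix the state at time $t$, let $x \in \Dom$ be the closest boundary point to $X_t$, and choose $\delta = \delta(c)$ from Lemma \ref{lem:energy_upper_2} together with the corresponding $k = O(|\log\delta|)$ and probability $p > 0$ from \eqref{eq:dst}. Note that $p$ depends only on $\be, d$, and (through $\delta$) on the thickness $c$. By \eqref{eq:dst}, on an event $E$ of conditional probability at least $p$ given $X_t$,
$$
\frac{\delta}{4}\, d(X_t) \;<\; |X_{t+k} - x| \;<\; \delta \cdot d(X_t).
$$
The upper bound matches exactly the hypothesis of Lemma \ref{lem:energy_upper_2} with $y = X_t$ and $z = X_{t+k}$, so $U(X_{t+k}) > U(X_t) + 1$ on $E$, i.e.\ $U_{t+k} - U_t > 1$ on $E$.

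Second, I would conclude crudely: on $E$ we have $(U_{t+k} - U_t)^2 > 1$, and on $E^c$ the quantity $(U_{t+k} - U_t)^2$ is nonnegative, so
$$
\EE\bigl[(U_{t+k} - U_t)^2 \,\big|\, U_t\bigr] \;\geq\; \EE\bigl[(U_{t+k} - U_t)^2 \mathbf{1}_E \,\big|\, U_t\bigr] \;\geq\; \PP[E \mid U_t] \;\geq\; p,
$$
and the lemma follows with $L := p$. Note we do not need the submartingale inequality $\EE[U_{t+k} \mid U_t] \ge U_t$ at all for this bound; it is subsumed by the much stronger one-sided increase guaranteed by Lemma \ref{lem:energy_upper_2}.

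I do not expect any serious obstacle, since the two deep facts are already packaged in the preceding results: the subharmonic growth estimate for $U$ (Lemma \ref{lem:energy_upper_2}) and the geometric observation \eqref{eq:dst} that the WoS reaches a prescribed annular neighborhood of the nearest boundary point with constant probability in logarithmically many steps. The only things to verify carefully are (i) that the point $x$ appearing in \eqref{eq:dst} is the closest boundary point to $X_t$, matching the $x$ used in Lemma \ref{lem:energy_upper_2} with $y = X_t$, and (ii) that the WoS does not terminate during the $k$ intervening steps, so that $X_{t+k}$ is well-defined as a point of $\Omega$; the latter is automatic since each WoS step moves by strictly less than $d(X_{t+j})/2$, keeping the walk inside $\Omega$.
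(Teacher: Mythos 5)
Your proposal is correct and matches the paper's own proof essentially line for line: both fix $X_t$, invoke Lemma \ref{lem:energy_upper_2} to get $U(z) > U(X_t) + 1$ on the $\delta d(X_t)$-neighborhood of the nearest boundary point $x$, use observation \eqref{eq:dst} (the paper states it directly as $\PP[|X_{t+k}-x| < \delta d(X_t)] > p^k$) to land in that neighborhood with constant probability, and conclude by dropping the nonnegative contribution from the complementary event. The only cosmetic difference is that you write $L = p$ while the paper writes $p^k$, but this reflects the same underlying bound.
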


We can now use Lemma \ref{lem:main} to prove the upper bounds on the rate of convergence for the case $\al=2$.
Let us replace the submartingale $U_t$ by a stopped submartingale 
$$V_t=\begin{cases}U_t,&t<T_n\\
U_{T_n}, &t\geq T_n
\end{cases}$$
By the optional stopping time theorem (see \cite{KS}), $V_t$ is also a positive submartingale; $V_t\leq\log\frac{4}{n}$.
This implies, in particular, that
\begin{equation}\label{eq:jump}
\EE[V_{t}(V_{t+k}-V_{t})]=\EE[\EE[V_{t}(V_{t+k}-V_{t})|V_{t}]]\geq\EE[\EE[V_{t}(V_{t}-V_{t})|V_t]]=0
\end{equation}

Lemma \ref{lem:main} implies that
\begin{equation}\label{eq:cor}
\EE[(V_{t+k}-V_t)^2]>L\cdot\PP[T_n>t+k].
\end{equation} 

We are now in  a position to establish the upper bounds for $\al=2$.
\begin{proof}[{\bf Proof of the upper bound from Theorem \ref{thm:main} for $\al=2$.}]
Assume first that for some M, $${\bf P}[T_n > M \log^2 n]\geq1/2.$$ It means that for all 
$t\leq M \log^2 n-k$, $\PP[T_n\geq t+k]\geq1/2$. 
This implies
\begin{multline*}
\EE[V_{t+k}^2]=\EE[((V_{t+k}-V_{t})+V_{t})^2]=\EE[V_{t}^2]+\EE[(V_{t+k}-V_{t})^2]+2\EE[V_{t}(V_{t+k}-V_{t})]\geq \EE[V_{t}^2]+L/2.
\end{multline*}
The last inequality follows from  \eqref{eq:jump} and \eqref{eq:cor}.
Hence $\displaystyle{\EE\left[V_{M \log^2 n}^2\right]\geq\frac{LM \log^2 n}{2k}}$.
Since $V_t\leq\log\frac{4}{n}$, this leads to a contradiction for large enough $M$.
\end{proof}

\subsection{Polynomial convergence: the case $\al>2$.}\label{subsec:al_large}

For the case $\al>2$, the required converse to Claim \ref{lem:dist} is relatively simple.
\begin{lemma}\label{lem:energy_low_large}
For $\alpha> 2$, and an $\alpha$-thick domain $\Omega$ in $\RR^d$ with the thickness $c$,
$$U(y) \ge K \cdot d(y)^{2-\alpha}$$
for all $y\in \Omega$. Here the constant $K=K(c,\alpha)$ depends only on $c$ and $\alpha$.
\end{lemma}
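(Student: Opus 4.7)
The idea is to exhibit a single measure $\mu \in \mathcal{M}$ whose Newton potential realizes the claimed bound pointwise, and then conclude via $U(y) \ge U^\mu(y)$. Fix $y \in \Omega$ and let $x \in \partial\Omega$ be a nearest boundary point, so that $|x-y| = d(y)$. Take $\mu = \mu_x$ produced by Lemma~\ref{lem:cthick}; conditions (1) and (2) of that lemma (together with the support being contained in $\overline{B(0,2)}$, visible from the inductive construction which keeps the support inside the initial dyadic cube $D_1(x)$) place $\mu_x$ in $\mathcal{M}$.

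The main computation invokes the layer-cake identity \eqref{eq:rewrite} with $\alpha=2$ (valid since $d \ge \alpha > 2$):
$$U^{\mu_x}(y) = \int_0^\infty \frac{\mu_x(B(y,r))}{r^{d-1}}\, dr.$$
The inclusion $B(y,r) \supset B(x, r - d(y)) \supset B(x, r/2)$ holds whenever $r \ge 2d(y)$, so property (3) of Lemma~\ref{lem:cthick} gives $\mu_x(B(y,r)) \ge c(r/2)^{d-\alpha}$ throughout the range $r \in [2d(y), 2]$. Restricting the integration to $[2d(y), 1]$ (and assuming $d(y) < 1/2$) yields
$$U^{\mu_x}(y) \ge c \cdot 2^{\alpha - d} \int_{2d(y)}^{1} r^{1-\alpha}\, dr = \frac{c \cdot 2^{\alpha-d}}{\alpha-2}\bigl[(2d(y))^{2-\alpha} - 1\bigr].$$
Choosing $d_0>0$ small enough that $(2d_0)^{2-\alpha} \ge 2$, for all $y$ with $d(y) \le d_0$ the bracket exceeds $(2d(y))^{2-\alpha}/2$, giving $U(y) \ge K_1\, d(y)^{2-\alpha}$ for a constant $K_1$ depending only on $c$, $\alpha$, and $d$.

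For the complementary range $d(y) > d_0$, the quantity $d(y)^{2-\alpha}$ is bounded above by $d_0^{2-\alpha}$, so it is enough to produce any positive constant lower bound on $U(y)$. Using the explicit form $U^{\mu_x}(y) = \frac{1}{d-2}\int |z-y|^{-(d-2)}\,d\mu_x(z)$ together with $|z-y| \le 3$ (since $y \in B(0,1)$ and $\supp \mu_x \subset \overline{B(0,2)}$) and the total-mass estimate $\mu_x(\RR^d) \ge \mu_x(B(x,1)) \ge c$ (again from Lemma~\ref{lem:cthick}(3)), one obtains $U(y) \ge c/((d-2)\cdot 3^{d-2})$. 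Combining the two regimes produces a uniform $K = K(c,\alpha,d)$ as in the statement. I do not anticipate any serious obstacle; the only point requiring care is the case split at $d_0$, which is forced because the single-measure layer-cake estimate degenerates as $d(y)$ approaches the diameter of $\Omega$, and it is cheaply patched by the crude direct bound just described.
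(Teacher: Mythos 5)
Your argument follows essentially the same route as the paper's: fix the nearest boundary point $x$, take $\mu_x$ from Lemma~\ref{lem:cthick}, rewrite $U^{\mu_x}(y)$ via the layer-cake identity~\eqref{eq:rewrite}, and lower-bound $\mu_x(B(y,r))$ for $r\ge 2d(y)$ using the inclusion $B(y,r)\supset B(x,r/2)$ together with the thickness estimate. The one difference is your explicit split at a threshold $d_0$, which is in fact a sound refinement: the paper's closing step asserts $c\,2^{\alpha-d}\int_{2d(y)}^{2}t^{1-\alpha}\,dt\ge K\,d(y)^{2-\alpha}$, which degenerates as $d(y)\to1$, and your crude constant lower bound on $U^{\mu_x}(y)$ in the regime $d(y)>d_0$ is exactly the patch needed there.
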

The lemma is established in  Section \ref{sec:growth}.

The idea of the proof of Theorem \ref{thm:main} in this case is now as follows. When the WoS is far
from the boundary $\Dom$ it makes fairly big steps and when it is close it makes small steps. 
There are not too many big steps because the number of big steps of length $>\ve$ confined
to $B(0,1)$ is bounded by $O(1/\ve^2)$. On the other hand, there are not too many small 
steps, because a small step means that the WoS is very close to $\Dom$, and should converge 
before  an opportunity to make many more steps.

More precisely, the number of ``big'' jumps is bounded by the following Claim.
\begin{claim}\label{clm:bigsteps}
Let $N(\ve, T)$ be the number of the jumps in the WoS process before the time $t$ which are bigger then $\ve$, i.e. $$N(\ve,T)=\# \{t\leq T\ |\ |X_t-X_{t-1}|\geq \ve\}.$$
Then 
$$\PP\left[N(\ve,T)>\frac4{\ve^2}\right]<1/4.$$
\end{claim}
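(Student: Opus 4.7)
The plan is to exploit the fact that $\{X_t\}$ is itself a martingale: since the jump $X_{t+1}-X_t$ is $(r_t/2)\gamma_t$ where $\gamma_t$ is uniform on the unit sphere and independent of $\mathcal{F}_t$, we have $\EE[X_{t+1}\mid\mathcal{F}_t]=X_t$. This is the only structural fact about the WoS I will need.

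My first step is to write down the quadratic variation identity. Conditioning on $\mathcal{F}_t$, the cross term vanishes and I obtain $\EE[|X_{t+1}|^2\mid\mathcal{F}_t]=|X_t|^2+(r_t/2)^2=|X_t|^2+|X_{t+1}-X_t|^2$. Telescoping and taking expectations (optional stopping is free because the martingale lives inside the bounded set $\Omega\subset B(0,1)$),
\begin{equation*}
\sum_{t=0}^{T-1}\EE\bigl[|X_{t+1}-X_t|^2\bigr]=\EE[|X_T|^2]-\EE[|X_0|^2]\le 1.
\end{equation*}
Here I am using the normalization $\Omega\subset B(0,1)$ fixed in Section~\ref{subsec:energy}; without it one just picks up a harmless constant depending on $\diam(\Omega)$.

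Next I bound $N(\ve,T)$ from above by the quadratic variation. Each ``big'' jump, by definition, contributes at least $\ve^2$ to $\sum_t|X_{t+1}-X_t|^2$, so
\begin{equation*}
\ve^2\cdot N(\ve,T)\ \le\ \sum_{t=0}^{T-1}|X_{t+1}-X_t|^2.
\end{equation*}
Taking expectations gives $\EE[N(\ve,T)]\le 1/\ve^2$.

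Finally, Markov's inequality finishes the job: $\PP[N(\ve,T)>4/\ve^2]\le \ve^2\EE[N(\ve,T)]/4\le 1/4$, and one gets the strict inequality stated in the claim either by noting $\EE[|X_0|^2]>0$ when $X_0\ne 0$, or simply by tightening the constant in front of $1/\ve^2$. There is really no obstacle here beyond verifying that the orthogonality of martingale increments applies at the (possibly random) stopping time $T$, which is immediate from the boundedness of $\{X_t\}$ in $B(0,1)$.
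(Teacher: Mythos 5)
Your proof is correct and takes essentially the same approach as the paper: both use the martingale property of $X_t$ to get orthogonality of increments, bound the expected quadratic variation by $1$ via $\Omega\subset B(0,1)$, observe that each big jump contributes at least $\ve^2$, and finish with Markov/Chebyshev.
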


\begin{proof}
Note now that because $X_t$ is a martingale, we have
\begin{multline}
1\geq \EE[X_T^2]-X_0^2=\sum_{k=1}^T \left(\EE[X_t^2]-\EE[X_{t-1}^2]\right)=\sum_{t=1}^T \EE[\left(X_t-X_{t-1}\right)^2]=\\ \EE[\sum_{t=1}^T \left(X_t-X_{t-1}\right)^2]\geq \ve^2 \EE[N(\ve, T)]
\end{multline}

The last equation implies the statement of the claim, by Tschebyshev inequality. \end{proof}

\begin{figure}[ht] 
\begin{center} \includegraphics[angle=0,scale=.65]{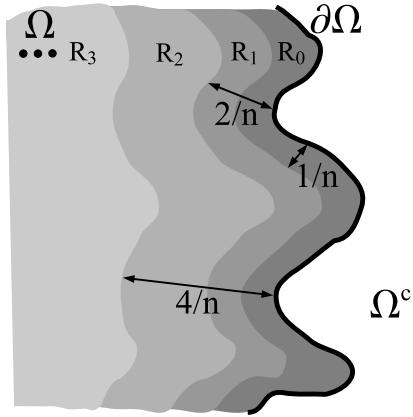}
\caption{The regions $R_k$ from the proof of the upper bounds for $\al>2$}
\label{fig:ubfigB}
\end{center}
\end{figure}

\ignore{
\begin{figure}[ht] 
\begin{center} \includegraphics[angle=0,scale=.65]{ubfigs.eps}
\caption{(a) Construction of the amalgamation $\nu$; (b) The regions $R_k$ from the proof of the upper bounds for $\al>2$}
\label{fig:ubfig}
\end{center}
\end{figure}
}

To bound the number of small jumps, we 
denote by $R_0\subset \Om$ the $1/n$-neighborhood of $\Dom$, and more generally, by 
$$R_k := \{x \in \Om ~:~2^{k-1}/n<d(x,\Dom)\le 2^{k}/n\}$$
(see Figure \ref{fig:ubfigB}).  Note that by Lemma \ref{lem:energy_low_large}, we have
\begin{equation}\label{eq:Rk}
2^{(k-1)(2-\alpha)}n^{\alpha-2}\geq U(y)\geq K 2^{k(2-\alpha)}n^{\alpha-2}
\end{equation}
for $y\in R_k$. Using this fact we prove the following. 

\begin{claim}
\label{clm:smallsteps}
Denote by $v_k$ the number of visits of $X_t$ to $R_k$ before the time $T$ when $X_t$ first hits the $1/n$-neighborhood of the boundary $\Dom$,
$$ v_k = \#\{ t < T~:~X_t\in R_k\}. $$
Then $$\PP[v_k > C_2 \cdot 2^{k (\al-2)} M] < 1/4^{M},$$ for some constant $C_2 =C_2 (c,d,\al,\be)$ and for any $M>1$.
\end{claim}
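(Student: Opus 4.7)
The plan is to combine a quantitative drift estimate for the energy function $U$ with the strong Markov property. First I would establish a uniform expected-visit bound $\EE_y[v_k]\le C'\cdot 2^{k(\alpha-2)}$, then iterate it into the desired $4^{-M}$ tail bound.

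\textbf{Drift.} The key step is an $\alpha>2$ analog of Lemma~\ref{lem:upward}: I claim there is $c_0=c_0(\alpha,\beta,c,d)>0$ such that for every $X_t\in R_k$,
\[
\EE\bigl[U(X_{t+1})-U(X_t)\,\bigm|\,X_t\bigr] \;\ge\; c_0\cdot 2^{k(2-\alpha)}n^{\alpha-2}.
\]
The lower bound is \emph{relative}: it scales with the ambient value of $U$, which is $\asymp 2^{k(2-\alpha)}n^{\alpha-2}$ on $R_k$ by Lemma~\ref{lem:energy_low_large} and Claim~\ref{lem:energy}. To obtain it I would argue geometrically: from any $X_t\in R_k$, a WoS jump has a positive probability (depending only on $\beta$ and $d$) of landing in the inner shell $R_{k-1}$, where $U$ is multiplicatively larger by a factor bounded below; the submartingale property of $U$ (immediate from the $\sup$ construction and the fact that each $U^\mu$ is harmonic in $\Omega$) controls the contribution of the remaining transitions.

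\textbf{Expected-visits bound.} Summing the drift along the stopped trajectory,
\[
c_0 n^{\alpha-2}\sum_k 2^{k(2-\alpha)} \EE_y[v_k] \;\le\; \EE_y[U(X_T)] - U(y) \;\le\; \tfrac{(2n)^{\alpha-2}}{\alpha-2},
\]
using $d(X_T)\ge d(X_{T-1})/2\ge 1/(2n)$ (a single WoS jump cannot more than halve the distance to $\Dom$) together with Claim~\ref{lem:energy}(3). Isolating the $k$-th summand of this nonnegative sum yields $\EE_y[v_k]\le C'\cdot 2^{k(\alpha-2)}$ with $C'=\tfrac{2^{\alpha-2}}{c_0(\alpha-2)}$, \emph{uniformly in the starting point} $y\in\Omega$.

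\textbf{Tail bound.} Let $\sigma_j$ denote the time of the $j$-th visit of $X_t$ to $R_k$. By the strong Markov property, on $\{\sigma_j<T\}$ the remaining visit-count $v_k-j$ is distributed as $v_k$ under initial condition $X_{\sigma_j}$, hence has conditional expectation at most $C'\cdot 2^{k(\alpha-2)}$; Markov's inequality then gives
\[
\PP\bigl[v_k-j\ge 4C'\cdot 2^{k(\alpha-2)}\,\bigm|\,\mathcal{F}_{\sigma_j},\,\sigma_j<T\bigr] \;\le\; \tfrac14.
\]
Applying this bound sequentially at $j = 0,\ \lceil 4C'\cdot 2^{k(\alpha-2)}\rceil,\ \lceil 8C'\cdot 2^{k(\alpha-2)}\rceil,\,\ldots$ produces $\PP\bigl[v_k>4MC'\cdot 2^{k(\alpha-2)}\bigr]\le 4^{-M}$, which is the claim with $C_2=4C'$.

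The hard part will be the drift estimate. The submartingale property alone only gives $\EE[\Delta U\mid X_t]\ge 0$, so extracting a positive lower bound of the correct order is the core analytic task. In the $\alpha<2$ regime, identity~\eqref{eq:laplace} makes $U^\mu_\alpha$ itself subharmonic, and concentration of $\mu$ near $X_t$ produces drift through a large Laplacian (case (b) of Lemma~\ref{lem:energy_upper_small}); in the $\alpha>2$ regime, the Newton potentials $U^\mu$ are \emph{harmonic} in $\Omega$, so the drift must instead come entirely from the strict gap between $U$ and the currently-best $U^\mu$ near the boundary---essentially the alternative (a) of Lemma~\ref{lem:energy_upper_small} suitably adapted, with the $\alpha$-thickness hypothesis providing the quantitative input.
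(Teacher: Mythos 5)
Your overall architecture (drift estimate $\Rightarrow$ expected-visits bound $\Rightarrow$ iterated Markov via the strong Markov property) is sound in principle, and the third step is correctly executed. But the argument hinges on the per-step drift estimate
$$\EE\bigl[U(X_{t+1})-U(X_t)\mid X_t\bigr]\;\ge\; c_0\cdot 2^{k(2-\alpha)}n^{\alpha-2}\qquad\text{for } X_t\in R_k,$$
i.e.\ a one-step increase of $U$ by a definite fraction of its current value, and this is exactly where there is a genuine gap. The sketch you give --- ``a WoS jump has positive probability of landing in $R_{k-1}$, where $U$ is multiplicatively larger; the submartingale property controls the remaining transitions'' --- does not yield a quantitative drift. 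For $\alpha>2$ the Newton potential $U^{\mu^*}$ at the measure $\mu^*$ optimal for $X_t$ is \emph{harmonic}, so $\EE[U^{\mu^*}(X_{t+1})\mid X_t]=U^{\mu^*}(X_t)=U(X_t)$ exactly, and
$$\EE[\Delta U\mid X_t]\;=\;\EE\bigl[U(X_{t+1})-U^{\mu^*}(X_{t+1})\mid X_t\bigr].$$
The submartingale property only says this is nonnegative. To get a lower bound of order $U(X_t)$ you would need $U-U^{\mu^*}$ to be of order $U$ at the landing points $X_{t+1}$, and this can fail: when $\mu^*$ already places (close to) the maximal allowed mass near the boundary point $x$ nearest $X_t$ --- which is precisely what an optimizer tends to do --- $U^{\mu^*}$ has the same $d(\cdot)^{2-\alpha}$ growth as $U$ near $x$, and the gap can be a vanishing fraction of $U$. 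You acknowledge this is ``the core analytic task'' and appeal to an $\alpha>2$ analog of alternative (a) of Lemma~\ref{lem:energy_upper_small}, but that dichotomy's alternative (a) is precisely the regime where $\mu$ does \emph{not} concentrate near $y$; the problematic optimizer $\mu^*$ sits in the complementary case (b), where for $\alpha<2$ the paper can exploit a large Laplacian of $U_\alpha^\mu$ but for $\alpha>2$ the potential is harmonic and that route is closed. So the claimed drift estimate is neither proved nor plausibly reachable by the adaptation you describe.

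The paper sidesteps the need for any per-step drift. It shows instead a \emph{constant escape probability from $R_k$}: given $X_t\in R_k$, with fixed probability $p$ the next $O(1)$ jumps carry the process several shells inward to $R_{k+2\log c}$; one then runs the normalized stopped submartingale $\Phi(X_{t+j})=(2n)^{2-\alpha}U(X_{t+j})-2^{k(2-\alpha)}$ until the walk either terminates or returns to $R_k$. Using only the two-sided growth of $U$ (Claim~\ref{lem:energy} and Lemma~\ref{lem:energy_low_large}) to compare the values of $\Phi$ on $R_k$ ($\le 0$), at termination ($\le 1$), and on $R_{k+2\log c}$ ($\ge\gamma$), the optional stopping theorem gives a constant lower bound $p\gamma$ on the termination probability, hence a geometric tail for $v_k$ with rate independent of $k$. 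This gives the stronger bound $\EE[v_k]=O(1)$, and requires only the qualitative submartingale property plus the growth estimates, not a quantitative Laplacian bound. Until you can actually establish the drift inequality --- which I doubt holds in the stated form --- your proposal does not constitute a proof.
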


\begin{proof}
Suppose that at some 
point $t$, $X_t \in R_k$. We estimate from below the probability that this is the {\em last} time the WoS visits $R_k$. 

First of all, with some probability $p=p(\al,\be,c,)>0$ and for some constant $\eta$, $X_{t+\eta}\in R_{k+2\log c}$, i.e. the series of first jump brings us much closer to $\Dom$.
 Consider the subharmonic function 
$$
\Phi(y) = {(2 n)^{2-\alpha}}U(y)-2^{k(2-\alpha)}.
$$
in $\Omega$. Then the process $\Phi(X_{t+j})$ is a submartingale. We stop it at time $t+\tau$, $\tau\geq\eta$, when either the WoS terminates or when 
$X_{t+\tau}\in R_k$ (i.e. the process gets back to $R_k$), whichever comes first. 
If $X_{t+\tau}$ is $1/n$-close to $\Dom$ (but not closer than $1/2n$), then $\Phi(X_{t+\tau})\le 1$, by \eqref{eq:Rk}. 
If $X_{t+\tau} \in R_k$, then, again by \eqref{eq:Rk}, $\Phi(X_{t+\tau}) \le 0$. Another application of \eqref{eq:Rk} implies  that 
if $y\in R_{k+2\log c}$, then $\Phi(y)>\ga$ for some constant $\ga$.
Thus the probability that the WoS terminates at $X_{t+\tau}$  (i.e. we
never visit $R_k$ again) is at least
$$
\PP[X_{t+\tau}\notin R_k] \ge \EE[\Phi(X_{t+\tau})] \ge \Phi(X_{t+\eta}) \ge p\ga.
$$
Thus the probability that the visit $X_t$ to $R_k$ is the last one is at least $p\cdot \ga$. 
The claim now follows from an estimate of the probability of having at least $v_k$ returns to $R_k$, each of them {\em not} being the last one. 
\end{proof}

Claims \ref{clm:bigsteps} and \ref{clm:smallsteps} together imply  the upper bounds on the rate of convergence for $\al>2$.

\begin{proof}[{\bf Proof of the upper bounds from Theorem \ref{thm:main} for $\al>2$.}]
By  Claim \ref{clm:smallsteps}, for any $k$, we have that 
$$\PP[v_{k-s} > C_2 \cdot 2^{(k-s) (\alpha-2)}\cdot (3/2+s/2)] < 1/4^{3/2+s/2} = (1/8)\cdot 2^{-s}.$$
Hence, by union bound $v_{k-s} \le C_2 \cdot 2^{(k-s) (\alpha-2)}\cdot (3/2+s/2)$ for
all $s\ge 0$  with probability at least $3/4$. Let $k$ be such that $2^k \approx n^{2/\alpha}$. Then, with the  
probability at least $3/4$, we have the total number of jumps smaller than $2^k/n$ bounded by
\begin{equation}
\label{eq:smallsteps}
\sum_{s=0}^{ k} v_{k-s} \le \sum_{s=0}^{k} C_2 \cdot 2^{(k-s) (\al-2)}\cdot (3/2+s/2) < 
4 C_2 \cdot 2^{k(\al-2)} \approx 4 C_2 \cdot n^{2-4/\al}.
\end{equation}
If we take $N= (C_1 + 8 C_2) n^{2 - 4/d}$ steps of the WoS, 
\eqref{eq:smallsteps} implies that at least half the steps would be of magnitude at least 
$2^k/n \approx n^{2/d - 1}$, except with probability $<1/4$. Applying the estimate from Claim \ref{clm:bigsteps}  with $\ve = 2^k/n$, we see 
that   with the probability at least $3/4$,  $N(2^k/n,t)\leq 4n^2/2^{2k}\approx 4 n^{2-4/\al}$. Hence with probability $\ge 1/2$ the WoS terminates after
$O(n^{2-4/\al})$ steps.
\end{proof}

\section{Boundary behavior of the energy function}
\label{sec:estimates}

In this section we prove the analytical estimates on the behavior of the energy function 
that have been used in Section \ref{sec:upper}.

\subsection{Estimating boundary growth}
\label{sec:growth}

We start with the easiest case $\al>2$.

\noindent
{\bf Lemma \ref{lem:energy_low_large} (Section \ref{subsec:al_large}):}
For $\alpha> 2$, and an $\alpha$-thick domain $\Omega$ in $\RR^d$ with the thickness $c$,
$$U(y) \ge K \cdot d(y)^{2-\alpha}$$
for all $y\in \Omega$. Here the constant $K=K(c,\alpha)$ depends only on $c$ and $\alpha$.
\smallskip

\begin{proof}
Let $x$ be the closest to $y$ point at $\partial\Omega$, and let $\mu=\mu_x$ be the corresponding measure from the definition of the $\alpha$-thick domains.
Then, by the identity \eqref{eq:rewrite} and since $B(x,r)\subset B(y, r+d(y))$,
\begin{multline}
U(y)\geq U_{2}^{\mu}(y)=\int_{d(y)}^{2}\frac{\mu(B(y, r))}{r^{d-1}}\,dr\geq
\int_{2d(y)}^{2}\frac{\mu(B(x, r-d(x)))}{r^{d-1}}\,dr\geq\\ \int_{2d(y)}^{2}\frac{\mu(B(x, r/2))}{r^{d-1}}\geq c 2^{\alpha-d} \int_{2d(y)}^{2}t^{1-\alpha}\geq K\cdot d(y)^{2-\alpha}.
\end{multline}
\end{proof}

Unfortunately, in the case $\alpha\leq 2$   lower bounds of the type established  in the proof of Lemma \ref{lem:energy_low_large} are insufficient, and we will use   finer estimates provided by the following construction.  

Let $y$ be a point in $\Omega$, $x$ be the point of $\partial\Omega$ that is the closest to $y$, and $\mu$ be a measure in the class $\mathcal M$. We construct a new measure $\nu\in\mathcal M$, which we call the \emph{amalgamation} of $\mu$ at the point $y$ in the following way.

\begin{figure}[ht] 
\begin{center} \includegraphics[angle=0,width=\textwidth]{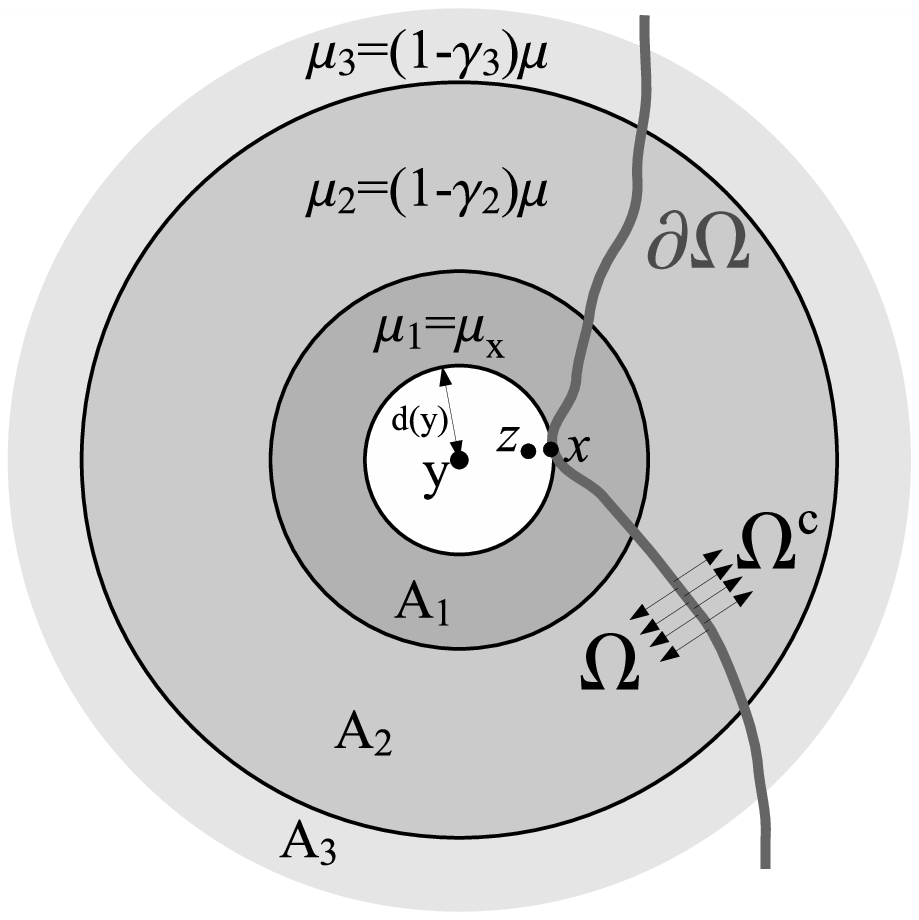}
\caption{Construction of the amalgamation $\nu=\sum_k \mu_k$}
\label{fig:ubfigA}
\end{center}
\end{figure}

Let measure $\mu_1$ be the measure $\mu_x$ from Lemma \ref{lem:cthick} restricted to $B(y, 2 d(y))$, $\mu_2=\mu_3=0$, and for $k\geq4$, let $\mu_k$ be the measure $\mu$ restricted to the $d$-dimensional annulus $$A_k=\{w\ :\ 2^{k-1} d(y)\leq |w-y|\leq2^{k} d(y)\}$$ scaled by the factor $1-\gamma_k:=1-2^{(4-k)(d-\alpha)}$. Let us also put $\ga_1=\ga_2=\ga_3=1$. We define $$\nu:=\sum_k\mu_k.$$ The ingredients of the construction are illustrated on Figure \ref{fig:ubfigA}.

Let us now prove that $\nu\in\mathcal M$. Consider any disk $B(w,r)$. Let $K$ be the largest number such that $B(w, r)$ intersects $A_K$. If $B(w,r)$ does not intersect $B(y, 2 d(y))$, the measure $\nu$ is no greater than $\mu$ on $B(w,r)$, and thus $\nu(B(w,r))\leq r^{d-\al}$. If $K\le 3$, $\nu(B(w,r))\le\mu_x(B(w,r))\leq r^{d-\alpha}$. For all other cases,
$r\geq 2^{K-3}d(y)$, which, by the choice of $\ga_K$, implies that $\ga_K r^{d-\alpha}\geq (2d(y))^{d-\al}$. Thus 
\begin{multline*}\nu(B(w,r))\leq\mu_x(B(y, 2d(y)))+\mu(B(w,r))-\sum_{k=1}^{K} \ga_k\mu(B(w,r)\cap A_k)\leq\\
(2d(y))^{d-\alpha}+(1-{\gamma_K})\mu(B(w,r))\leq \ga_K r^{d-\alpha}+(1-\ga_K) r^{d-\alpha}=r^{d-\alpha}.
\end{multline*}  
The second inequality follows from the fact that the sequence $\{\ga_k\}$ is non-increasing. 
We first apply the amalgamation construction to the case $\al=2$.

\noindent
{\bf Lemma \ref{lem:energy_upper_2} (Section \ref{subsec:al_2}):}
There exists
a constant $\delta$, dependent only on the thickness $c$, such that the following holds.
Let $\Omega$ be a $2$-thick domain. Let $y\in\Omega$ and $x\in\partial\Omega$ be the closest point to $y$. Then 
\begin{equation}
U(z)>U(y)+1 \text{ whenever } |z-x|<\delta \cdot d(y).
\end{equation}
\smallskip
\begin{proof}
Since $\mathcal M$ is a compact set, $U(y)=U^{\mu}(y)$ for some $\mu\in\mathcal M$.

Let $\mu_0$ be the restriction of the measure $\mu$ to $B(0,2)\setminus B(y, 2d(y))$. By \eqref{eq:rewrite} and \eqref{eq:measure}
\begin{equation}\label{eq:mu0}
U^{\mu_0}(y)\geq U^{\mu}(y)-\log 2.
\end{equation}

Let   $\nu$ be the amalgamation of $\mu$ at $y$.
Next, we will show that 
\begin{equation}\label{eq:up1}
U^{\nu}(z)\geq U^{\mu_0}(z)-C_1+c\cdot 2^{2-d} \log\frac{1}{\de}
\end{equation}
and
\begin{equation}\label{eq:up2}
U^{\mu_0}(z)\geq U^{\mu_0}(y)-C_2
\end{equation}
whenever $|z-x|<\de d(y)$ for some constants $C_1$ and $C_2$ depending only on $d$ and $c$.
These inequalities, together with \eqref{eq:mu0}, imply the statement of the lemma  whenever $\de$ is sufficiently small (namely, when 
$\log 1/\delta>2^{d-2} (C_1+C_2+1+\log 2)/c$).

To establish \eqref{eq:up1}, let us note that for any $k$ we have $$\mu(A_1)+\mu(A_2)+\dots+\mu(A_k)=\mu(B(y, 2^k d(y)))\leq (2^kd(y))^{d-2}.$$ By the Abel summation formula,
$$\sum_{k}\ga_k 2^{k(2-d)}\mu(A_k)\leq \sum_k d(y)^{d-2}(2^{d-2}\ga_{k-1}-\ga_{k})\cdot 2^{2-d}\leq 5 (d(y))^{d-2}.$$
This implies   
\begin{equation}
\frac1{d-2}\sum_{k\ge 2}\ga_k\int_{A_k}\frac1{|w-z|^{d-2}}\,d\mu_0(w)\leq  \sum_{k\ge 2}\ga_k\mu(A_k)(2^{k-2}d(y))^{2-d}\leq 5\cdot 4^{d-2}.
\end{equation}

Thus we obtain
\begin{multline*}
U^\nu(z) \ge \int_{2\de d(y)}^{d(y)} \frac{\mu_x(B(z,r))}{r^{d-1}}\, dr+
\sum_{k\geq 2}\int_0^{\infty}\frac{\mu_k(B(z, r))}{r^{d-1}}\,dr \geq \\
\int_{2\de d(y)}^{d(y)} \frac{\mu_x(B(x,r-\de d(y)))}{r^{d-1}}\, dr+
\int_{0}^{\infty}\frac{\mu_0(B(z, r))}{r^{d-1}}\,dr-\frac1{d-2}\sum_{k\ge 1}\ga_k\int_{A_k}\frac{d\mu_0(w)}{|w-z|^{d-2}}\geq\\ \
\int_{2\de d(y)}^{d(y)} c \cdot \left(\frac{r-\de d(y)}{r}\right)^{d-2} \, \frac{dr}{r}+
\int_{0}^{\infty}\frac{\mu_0 (B(z,r))}{r^{d-1}}\,dr-5\cdot 4^{d-2}\ge c\cdot 2^{2-d} \log\frac{1}{2\de} + U^{\mu_0}(z)-5\cdot 4^{d-2}.
\end{multline*}
which implies  \eqref{eq:up1}.

To obtain \eqref{eq:up2}, note that for any point $w\in[y,z]$, for $d>2$ we have, by the estimate \eqref{eq:measure}
\begin{multline}
\left|\nabla U^{\mu_0}(w)\right|
\le\frac1{d-2}\int \left|\nabla_w\frac1{|\xi-w|^{d-2}}\right|\,d\mu_0(\xi)
= 
\int \frac1{|\xi-w|^{d-1}} \,d\mu_0(\xi) = \\ (d-1) \int_{0}^\infty \frac{\mu_0(B(w,r))}{r^d}\,dr
=(d-1) \int_{d(y)}^\infty \frac{\mu_0(B(w,r))}{r^d}\,dr \le (d-1) \int_{d(y)}^\infty \frac{r^{d-2}}{r^d}\,dr 
\leq \frac A{d(y)}
\end{multline}
for some constant $A$ depending only on $d$ and $c$. The same inequality is derived similarly in the case $d=2$. 
This implies that
\begin{equation*}
U^{\mu_0}(z)-U^{\mu_0}(y)= \int_{[y,z]}\nabla U^{\mu_0}(w)\cdot\,dw  \geq 
-|z-y|\frac A{d(y)}\geq -A,
\end{equation*}
which is exactly the equation \eqref{eq:up2}.
\end{proof}

Another application of the amalgamation construction will establish the lower bounds required in the case $\alpha<2$.

\noindent
{\bf Lemma \ref{lem:energy_upper_small} (Section \ref{subsec:al_small}):}
For any $\al<2$ and $c>0$, there exist
two constants $\delta$ and $\eta$, such that the following holds.

Let $\Omega$ be an $\alpha$-thick domain in $\RR^d$ with thickness $c$. Let $y\in\Omega$ and $x\in\partial\Omega$ be the closest point to $y$. Let $\mu\in\mathcal M$.

Then either  
\begin{equation}
U(z)>U_{\al}^{\mu}(z)+1 \text{ whenever } \delta/4\cdot d(y)<|z-x|<\delta \cdot d(y).
\end{equation}
or
\begin{equation}
\mu(B(y,2d(y)))\geq\eta d(y)^{d-\alpha}
\end{equation}
\smallskip

\begin{proof}
Let $\eta=\delta^{d-\al+2}$
Assume that $\mu(B(y, 2 d(y)))<\eta d(y)^{d-\alpha}$. Let $\mu_0$ be the restriction of $\mu$ to  $B(0,2)\setminus B(y, 2d(y))$, as in the previous lemma.  We have, by \eqref{eq:rewrite},
\begin{equation}\label{eq:up3}
U_{\alpha}^{\mu_0}(z)\geq  U_{\alpha}^{\mu}(z)-\int_{d(z)}^{2d(y)} \frac{\mu(B(z,r))}{r^{d-\al+1}} dr
\geq U_{\al}^{\mu}(z)-2d(y)\delta.
\end{equation}

On the other hand, the same reasoning as the proof of \eqref{eq:up1} above, gives
\begin{equation}\label{eq:up4}
U(z)\geq U_{\alpha}^{\nu}(z)\geq U_{\al}^{\mu_0}(z)-C_1+c\cdot 2^{\alpha-d} \log\frac{1}{\de}
\end{equation}
for some constant $C_1$ depending only on $d$, $\alpha$, and $c$. Here, as in \eqref{eq:up1}, $\nu$ is the amalgamation of $\mu$ at $y$.

Estimates \eqref{eq:up3} and \eqref{eq:up4} together imply the statement of the lemma.
\end{proof}

\subsection{The boundary drift of the WoS process: $\al\leq2$}\label{sec:drift}
First we establish that the process $U_t$ has the drift toward the boundary in the case $\alpha<2$. 

\noindent
{\bf Lemma \ref{lem:upward} (Section \ref{subsec:al_small}):}
There are  constants $L$ and $k$, depending only on $c$, $\be$, and $\alpha$,  such that $$\EE[(U_{t+k}-U_t)|U_t]>L.$$
\smallskip
\begin{proof}
Let us fix $X_t$. By weak-$^*$-compactness of the set $\mathcal M$, there exists a measure $\mu$ such that $U_{\al}^{\mu}(X_t)=U(X_t)=U_t$. By Lemma \ref{lem:energy_upper_small},  
either
\begin{equation}
U(z)>U_{\al}^{\mu}(z)+1 \text{ whenever } \delta/2\cdot d(X_t)<|z-x|<\delta \cdot d(X_t).
\end{equation}
where $x$ is the closest to $X_t$ point on $\partial\Omega$,
or
\begin{equation}
\mu(B(X_t,2d(X_t)))\geq\eta d(X_t)^{d-\alpha}
\end{equation}

Let us start with the first case.

For some $p>0$ dependent only on $d$ and $\be$, $$\left(1-\beta/2\right)^{-k}d(X_t)/2<\PP[|X_{t+k}-x|<\left(1-\beta/2\right)^{-k}d(X_t)]>p^k.$$  
Hence, for sufficiently large $k$, 
\begin{equation}\label{eq:prob}
\PP[\delta/2\cdot d(X_t)<|X_{t+k}-x|<\delta \cdot d(X_t)]>p^k.
\end{equation}

Let us now observe that by subharmonicity of the functions $U$ and $U^{\mu}_\al$, the previous estimate, the fact that 
$U\ge U^\mu_\al$ and the assumption \eqref{eq:up3},
\begin{multline}
\EE[(U_{t+k}-U_t)|X_t]= \EE[(U_{\alpha}^{\mu}(X_{t+k})-U_{\alpha}^{\mu}(X_t))|X_t]+\EE[U(X_{t+k})-U_{\alpha}^{\mu}(X_{t+k})| X_t]\geq\\ \EE[U(X_{t+k})-U_{\alpha}^{\mu}(X_{t+k})| X_t\text{ and } \delta/2\cdot d(X_t)<|X_{t+k}-x|<\delta \cdot d(X_t)]>p^k.
\end{multline}
Since the value of $X_t$ determines the value of $U_t$, this establishes the statement of Lemma in the first case (with $L=p^k$).

Now let us consider the second case. By the Green formula, for a $C^2$-smooth function $u$,
\begin{equation}\label{eq:delta_energy}
\EE[u(X_{t+1})| X_t]-u(X_t)=\int_{\beta d(X_t) S^d}u(y)\,dS(y)-u(X_t)=\int_0^{\beta d(X_t)} r^{1-d}\int_{B(X_t, r)}\Delta u(y)\,dV(y)\,dr
\end{equation}
where $S^d$ is the unit sphere in $\RR^d$ with the normalized Lebesgue measure $S$, and $dV$ is the volume element in $\RR^d$.

Note that by \eqref{eq:rewrite} and \eqref{eq:laplace}, for  $|y-X_t|\leq \be d(X_t)$ we have
\begin{multline}\label{eq:laplace_upper}
\Delta U_\alpha^{\mu}(y)=(d-\alpha+2)(2-\alpha)U_{\alpha-2}^{\mu}(y)=\\(d-\alpha+2)(2-\alpha)\int_0^{\infty}\frac{\mu(B(y, r))}{r^{d-\alpha+3}}\,dr\geq\\
(d-\alpha+2)(2-\alpha){\mu(B(X_t, 2d(X_t)))}\int_{(2+\beta)d(X_t)}^{\infty}\frac1{r^{d-\alpha+3}}\,dr\geq\\\eta(2-\alpha)\frac{\mu(B(X_t,2 d(X_t)))}{{((2+\beta)d(X_t))}^{d-\alpha+2}}\geq C_1 (d(X_t))^{-2}
\end{multline} 
for some constant $C_1$ depending only on $d$, $\alpha$, and $\beta$.

So, using \eqref{eq:delta_energy}, applied to $u=U^{\mu}$ and \eqref{eq:laplace_upper}, we get
\begin{multline}\label{eq:fast_final}
\EE[U_{t+k}|X_t]-U_t\geq \EE[U_{t+1}|X_t]-U_t\geq\EE[U^{\mu}(X_{t+1})|X_t]-U^{\mu}(X_t)\geq\\ \int_0^{\beta d(X_t)} r^{1-d}\bigl(C_2 (d(X_t))^{-2} r^d\bigr)\,dr=C_2 \left((d(X_t))^{2}\right)(d(X_t))^{-2}\beta^2/2=L
\end{multline}
for some constants $C_2$ and $L$ depending only on $d$, $\alpha$, and $\beta$. Since again the value of $X_t$ determines the value of $U_t$, the Lemma follows.
\end{proof}

Let us now turn to the case $\alpha=2$.

\noindent
{\bf Lemma \ref{lem:main} (Section \ref{subsec:al_2}):}
Let $\Omega$ be a $2$-thick domain in $\RR^d$.
There are  constants $k$ and $L$, depending only on the thickness $c$, the jump ratio $\be$, and the dimension $d$, such that $$\EE[(U_{t+k}-U_t)^2|U_t]>L.$$
\smallskip
\begin{proof}
Fix $X_t$. By Lemma \ref{lem:energy_upper_2}, there exists a constant $\delta$, dependent only on $d$ and $c$,  such that 
\begin{equation}
U(y)>U(X_t)+1 \text{ whenever } |y-x|<\delta \cdot d(X_t).
\end{equation}
This implies that $\|U_{t+k}-U_t\|^2>1$ whenever $|X_{t+k}-x|<\de d(X_t)$. Note that for some $p>0$ dependent only on $d$ and $\be$, $$\PP[|X_{t+k}-x|<\left(1-\beta/2\right)^{-k}d(X_t)]>p^k.$$  
Hence, for sufficiently large $k$, $\PP[|X_{t+k}-x|<\de d(X_t)]>p^k$, which, in turn, implies the statement of the lemma.

\end{proof}

\section{Lower bounds: examples}\label{sec:lower}

In this section we construct examples of $\al$-thick domains for which the bounds in Theorem \ref{thm:main}
are tight. The main idea of the construction is as follows. We take a domain $A$ in $\RR^d$, such as the unit ball or  a cylinder. 
We remove a ``thin" subset of points $C$ from $A$ to obtain $\Om=A\setminus C$. The set $C$ can be thought of
 as the subset of the grid $(\ga\ZZ)^d$, for some small $\ga>0$. The set $C$ will be chosen so that it ``separates'' the origin  from the boundary of $A$.
We set $n=1/\ve$. We choose $\ga$ so that the probability of the WoS originated at $0$ hitting a $1/n$-neighborhood of 
$C$ before hitting the boundary of $A$ is $<1/2$ (this means that $C$ is ``thin"). Hence, with high probability, the WoS
will reach $\partial A$ before terminating. However, in this case the WoS will have to ``pass through'' the set $C$, 
where its step magnitudes are bounded by $\ga$. This will, in turn, yield an $\Om(1/\ga^2)$ bound on the convergence 
time. The analysis is more intricate in the case when $\al=2$. In the case when $\al>2$ is not an integer, a slight
modification to this construction is needed, as will be described below.

\subsection{Proof of the lower bound in the case $\alpha>2$}
\label{sec:lb1}

In this section we will  give an example of a ``thin'' $\alpha$-thick domain $\Om_{\alpha}$ for which the WoS will likely take $\Om(n^{2-4/{\alpha}})$ steps to converge within $\ve=1/n$ from the boundary $\Dom_{\alpha}$. The domain $\Om_\al$ will reside in $\RR^d$, where $d=\lceil \al \rceil \ge 3$. 
It is easy to see that the examples in higher dimensions $d'>d$ can be constructed from $\Om_\al$ by simply multiplying $\Om_\al$ by$[-1,1]^{d'-d}$. 

The set $$\Om_\al := \Bigl(B(0,1)_{d-1} \times [-1,1] \Bigr)\setminus S$$ is comprised of a $d$-dimensional cylinder with a set of points $S$ removed.
Here $B(0,1)_{d-1}$ denotes the unit ball in $\RR^{d-1}$. We take $A$ to be the ``middle $1/3$" shell of the $d$-dimensional cylinder:$$
A = \{ z \in \RR^{d-1}:~1/3<|z|<2/3\} \times \{ x \in [-1,1]:~1/3<|x|<2/3\}.
$$
Let $0<\ga \ll 1$ be the grid size that will be selected later. We consider the set $A_\ga$ of 
gridpoins in $A$. $$
A_\ga = (\ga \ZZ)^d \cap A. 
$$
Let $0\le \eta := d-\al <1$. Denote by $C_\eta$ the $\eta$-dimensional Cantor set in the interval $[0,1]$. It is obtained by removing the middle $\la$-fraction of the interval, then removing the middle $\la$-fraction of each subinterval etc. For the set $C_\eta$ to be $\eta$-dimensional, we choose $\la$ so that 
$$\eta = \frac{\log 2}{\log 2 - \log (1-\la)}. $$
In the special case when $\eta = 0$, we set $C_0 = \{0\}$. We now define the set $S$:
$$
S := A_\ga + \{0\} \times \ga C_\eta. 
$$
In other words, $S$ is obtained by attaching a $\ga$-scaled copy of $C_\eta$ to each gridpoint of $A_\ga$. 
This completes the definition of the set $\Om_\al = \bigl( B(0,1)_{d-1} \times [-1,1] \bigr) \setminus S$. 
Each point in $\partial\Om_\al$ has an $\eta$-dimensional Cantor set in $\Om_\al^c$ attached to 
it is captured by the following claim. Thus there is a universal constant $C\ge 1/16$ such that for every $\ga$, the set $\Om_\al$ is $\al$-thick with the thickness $C$.

The following two claims assert that for an appropriately chosen $\ga$, the WoS originated at the origin $0\in \RR^d$ and 
terminated at the $1/n$ neighborhood of $\partial\Om_\al$ is likely to hit the boundary of the external
cylinder (as opposed to the neighborhood of $S$), and is likely to spend $\Om(n^{2-4/\al})$ steps getting there. 

\begin{claim}
\label{cl:lb2}
If $\ga > 8 n^{2/\al-1}$ then a WoS originated at $0$ and terminated at the $1/n$-neighborhood of the boundary $\partial\Om_\al$ will 
hit the boundary  of the cylinder $B(0,1)_{d-1} \times [-1,1]$ with probability at least $3/4$. 
\end{claim}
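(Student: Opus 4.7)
The plan is to reduce the claim to a harmonic-measure calculation for Brownian motion and then bound that harmonic measure by a capacity-plus-union-bound argument, exploiting the fact that $S$ is a thin collection of grid-supported Cantor sets.

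\textbf{Reduction to Brownian motion.} Because each WoS jump samples uniformly from a sphere contained in $\Om_\al$, the WoS and Brownian motion agree on the exit distribution from every such sphere; hence by induction on the spheres visited, the WoS termination distribution is a $1/n$-smoothing of the harmonic measure on $\partial\Om_\al$ from $0$. In particular, the probability that the WoS terminates in the $1/n$-neighborhood of $S$ is bounded by
$$\PP_0\bigl(B_t \in (2/n)\text{-nbhd of } S \text{ for some } t<\tau_{\mathrm{cyl}}\bigr),$$
where $B_t$ is Brownian motion and $\tau_{\mathrm{cyl}}$ is its exit time from $B(0,1)_{d-1}\times[-1,1]$; the inflation from $1/n$ to $2/n$ absorbs the discrepancy between WoS termination and first BM exit. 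It therefore suffices to prove the above probability is at most $1/4$.

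\textbf{Capacity bound for one gridpoint.} Fix $p\in A_\ga$ and write $\eta:=d-\al\in[0,d-2)$. The hypothesis $\ga>8n^{2/\al-1}$ gives $\ga n>8\geq 1$, so the standard self-similar covering of $C_\eta$ produces at most $N\lesssim(\ga n)^\eta$ intervals of length $1/n$ covering $\ga C_\eta$. Consequently the $2/n$-neighborhood of $p+\{0\}\times\ga C_\eta$ is covered by $N$ balls of radius $3/n$ in $\RR^d$. Since $p\in A$ and $\ga$ is small, every such center $q$ satisfies $|q|\ge 1/4$, so for $d\ge 3$ the classical estimate $\PP_0(\tau_{B(q,r)}<\infty)\le (r/|q|)^{d-2}$ yields a per-ball bound of $(12/n)^{d-2}$. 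Summing,
$$\PP_0\bigl(\text{BM hits $(2/n)$-nbhd of }p+\{0\}\times\ga C_\eta\bigr)\lesssim(\ga n)^\eta\cdot n^{-(d-2)}=\ga^\eta n^{\eta-d+2}.$$

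\textbf{Union bound and verification.} Since $A$ has bounded volume, $|A_\ga|\lesssim\ga^{-d}$. A union bound over $A_\ga$ combined with the previous estimate gives
$$\PP_0\bigl(\text{BM hits }(2/n)\text{-nbhd of }S\bigr)\le C'\cdot\ga^{-d}\cdot\ga^\eta n^{\eta-d+2}=C'\cdot\ga^{-\al}\,n^{2-\al},$$
where $C'$ depends only on $d$. The hypothesis $\ga>8n^{2/\al-1}$ gives $\ga^{-\al}n^{2-\al}<8^{-\al}$, so the probability is at most $C'\cdot 8^{-\al}$. Since $d=\lceil\al\rceil\le\al+1$, the constant $C'$ grows at most like $C_0\cdot 12^{d-2}$, which is dominated by $8^\al/4$ when $\al>2$, giving the desired bound $\le 1/4$.

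\textbf{Expected obstacle.} The main technical nuisance is keeping track of the geometric constants so that the explicit threshold $8\,n^{2/\al-1}$ (rather than some larger constant) genuinely suffices uniformly in $\al$; this requires careful bookkeeping of the covering number for $C_\eta$ and the dimensional constants in the hitting-probability estimate. A secondary issue is the precise justification of the first reduction from WoS to BM with $1/n$-smoothing, but since each WoS jump is contained in a ball in $\Om_\al$ and the mismatch between WoS termination at the $1/n$-boundary and the true BM exit is at most one extra jump of radius $<1/n$, inflating the target set from a $1/n$- to a $2/n$-neighborhood throughout eliminates this issue cleanly.
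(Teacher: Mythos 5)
Your proposal takes essentially the same route as the paper. The paper packages the ``cover the $1/n$-neighborhood of $S$ by $O(\ga^{-\al}n^{\eta})$ balls of radius $O(1/n)$, each contributing hitting probability $O(n^{2-d})$'' calculation into a single harmonic function $\Phi(x) = \sum_{y\in P}|x-y|^{2-d}$, applies the martingale property of $\Phi(X_t)$ directly to the WoS process, and finishes with Markov's inequality $\PP[d(X_T,S)<1/n] \le \Phi(0)/(n/2)^{d-2}$. Your per-ball hitting estimate $\PP_0(\tau_{B(q,r)}<\infty)\le (r/|q|)^{d-2}$ followed by a union bound is the same sum, just unbundled term by term.

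Two points where the paper is cleaner and where your version needs a bit more care. First, the paper sidesteps the Brownian-motion coupling entirely: since $\Phi$ is harmonic in $\Om_\al$ and the WoS jumps are uniform on spheres in $\Om_\al$, $\Phi(X_t)$ is already a martingale for the WoS process itself, so there is no need to pass to $B_t$ and then argue about ``$1/n$-smoothing'' or inflate $1/n$ to $2/n$. Your coupling argument is fixable (the correct statement is simply that $X_T$ lies on the BM path before the BM's exit from $\Om_\al$, and $d(X_T,\partial\Om_\al)<1/n$), but as written the justification is muddled. Second, your final constant check --- that $C'\cdot 8^{-\al}\le 1/4$ where $C'\lesssim C_0\cdot 12^{d-2}$ --- is asserted rather than verified, and for $\al$ just above $2$ (so $d=3$, $12^{d-2}=12$, $8^{\al}/4\approx 16$) the implicit constant $C_0$, which absorbs both $\operatorname{vol}(A)$ and the box-covering constant of the Cantor set, must be bounded by roughly $4/3$; this does not obviously hold and should be traced through. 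You flag this yourself as the main obstacle, and indeed the paper is also somewhat cavalier here (its chain $\Phi(0)<3^{d-2}|P|<6\ga^{-\al}n^{\eta}$ is only literally valid at $d=3$), so you are in good company, but the bookkeeping deserves the same scrutiny either way.
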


\begin{proof}
It is not hard to see that we can choose a finite subset $P$ of points in $S$ such that $|P|<2 \ga^{-\al} \cdot n^{\be}$, and for every $x$ such that 
$d(x,S)<1/n$ there is a $p\in P$ such that $|x-p|<2/n$. Consider the harmonic function
\begin{equation}
\Phi(x) := \sum_{y\in P} \frac{1}{|x-y|^{d-2}}>0. 
\end{equation}
Since the function $\Phi$ is harmonic, its application to the WoS process $X_t$ gives a martingale. 
Hence if $T$ is the stopping time of the process, 
$$
\EE[\Phi(X_T)] = \Phi(X_0)=\Phi(0) < 3^{d-2} \cdot |P| < 6 \ga^{-\al}\cdot n^{\eta}.
$$ 
On the other hand, if $d(X_T,S)<1/n$, then there is a $y\in P$ with $|X_T-y|<2/n$, and 
$$
\Phi(X_T)> = 1/|y-X_T|^{d-2} >  (n/2)^{d-2}. 
$$
Hence the probability of $X_T$ being near $S$ is bounded by 
$$
\frac{\EE[\Phi(X_T)]}{(n/2)^{d-2}} < \frac{6 \ga^{-\al}\cdot n^{\eta}}{(n/2)^{d-2}}<\frac{2^{d+1}\ga^{-\al}}{n^{\al-2}}
< \frac{8^{\al} n^2}{4 (\ga n)^\al} < 1/4.
$$
The last inequality follows from the condition on $\ga$. 
\end{proof}

\begin{claim}
\label{cl:lb3}
There is a universal constant $\de>0$ such that for $\ga$ as above, with probability at least 
$1/2$ the WoS takes at least $\de (1/\ga)^2$ steps to reach the boundary of the cylinder
$B(0,1)_{d-1} \times [-1,1]$.
\end{claim}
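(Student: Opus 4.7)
The argument is a martingale second-moment (quadratic-variation) argument dual to Claim \ref{clm:bigsteps}, combined with the geometric observation that inside the shell $A$ the WoS is forced to take small steps. Let $T$ denote the WoS stopping time. Since $X_0 = 0$ and $X_t$ is a bounded vector martingale, the standard identity gives
$$
\EE\bigl[|X_T|^2\bigr] \;=\; \EE\biggl[\sum_{t=1}^T |X_t - X_{t-1}|^2\biggr].
$$
By Claim \ref{cl:lb2}, with probability at least $3/4$ the process exits on the cylinder wall, on which event $|X_T|\ge 1$; hence the expected sum above is at least $3/4$.

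Next I would record the geometric fact that in a $\gamma$-neighborhood $N$ of $A$, every point $y$ satisfies $d(y,S)\le C_d\,\gamma$, because $A_\gamma$ is the full $\gamma$-grid in $A$ and each of its gridpoints carries a Cantor set of diameter $\gamma$. Consequently, every WoS jump initiated inside $N$ has size at most $C_d\gamma/2$. Using a harmonic-measure comparison with the unperturbed cylinder, together with the bound on $\PP[X_T\in\text{nbhd of }S]$ from Claim \ref{cl:lb2}, I would argue that with some constant probability $p_0>0$ (independent of $\gamma$ and $n$) the WoS path enters $N$ and subsequently exits at the cylinder wall, which lies at distance at least $1/3$ beyond $A$. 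Let $\tau$ be the first entry time into $N$ and $\sigma$ the first later time at which the process crosses the outer face of $N$; on this event, applying the same identity to the sub-trajectory $(X_t)_{\tau\le t\le\sigma}$ yields
$$
c_0^2 \;\le\; \EE\bigl[|X_\sigma - X_\tau|^2\bigr] \;=\; \EE\biggl[\sum_{t=\tau+1}^{\sigma} |X_t - X_{t-1}|^2\biggr] \;\le\; (C_d\gamma/2)^2\,\EE[\sigma - \tau],
$$
for some universal constant $c_0>0$. A Markov-type inequality then converts this into the desired lower bound $T\ge\delta/\gamma^2$ with probability at least $1/2$.

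The main obstacle is controlling excursions of $(X_t)$ out of $N$ during the window $[\tau,\sigma]$: such excursions temporarily allow the WoS to regain large jumps. I would address this by enlarging $N$ to a concentric band of constant radial thickness around $A$, chosen wide enough that no single WoS jump originating inside the band can cross its full width. Then $\sigma$ becomes a genuine \emph{crossing} time of the whole band, the bound $|X_t - X_{t-1}|=O(\gamma)$ holds throughout $[\tau,\sigma]$, and the displayed quadratic-variation estimate yields the claim.
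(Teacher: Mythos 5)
Your overall strategy---using the orthogonality of martingale increments (quadratic variation) to show that crossing a constant-width region where the WoS step size is $O(\ga)$ requires $\Om(1/\ga^2)$ steps---is exactly what the paper intends; the paper's proof of Claim~\ref{cl:lb3} simply refers to the argument of Claim~\ref{cl:cl1}, which is precisely this kind of estimate. Your preliminary observations (reduction to the event of Claim~\ref{cl:lb2}, and the geometric bound $\dist(y,S)\le C_d\ga$ in a $\ga$-neighborhood of $A$) are also correct.

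The gap is in the treatment of excursions, and the cure you propose does not close it. You take $\tau$ to be the first entry into the band and $\sigma$ the first crossing of its outer face, and you try to confine the walk during $[\tau,\sigma]$ by thickening the band to \emph{constant} radial width so that no single jump can cross it. But then your assertion that ``$|X_t-X_{t-1}|=O(\ga)$ holds throughout $[\tau,\sigma]$'' is false: at a point of the thickened band whose distance from $A$ is of constant order, the distance to $\partial\Om_\al$ is also of constant order, so the permitted WoS jump is $\Theta(1)$, not $O(\ga)$. The quadratic-variation inequality then degenerates to $c_0^2\le(\text{const})^2\cdot\EE[\sigma-\tau]$, giving only $\EE[\sigma-\tau]=\Om(1)$. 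Enlarging the window to confine the walk inevitably destroys the $O(\ga)$ step bound, which is the whole point of the argument.

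The fix, and the actual structure of the proof of Claim~\ref{cl:cl1} that the paper points to, is to start the clock \emph{deep inside} the shell rather than at its face. Let $\tau$ be the first time $X_t$ lies at distance $\ge 1/12$ from both faces of $A$ (i.e.\ in the middle third of $A$), and let $\sigma$ be the first subsequent time $X_t$ leaves $A$ or the walk terminates. Then for all $\tau\le t<\sigma$ we have $X_t\in A$, so $\dist(X_t,S)\le C_d\ga$ and every increment over $(\tau,\sigma]$ genuinely has size $O(\ga)$; moreover $|X_\sigma-X_\tau|\ge 1/12-O(\ga)$ is still of constant order because $X_\tau$ started at constant depth. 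Since steps taken inside $A$ are $O(\ga)$, any trajectory that reaches the cylinder wall (probability $\ge 3/4$ by Claim~\ref{cl:lb2}) must pass through this middle region, so $\tau<\infty$ on that event. Finally, note that a lower bound $\EE[\sigma-\tau]=\Om(1/\ga^2)$ does not by itself give a lower bound in probability; one should instead apply the orthogonality identity to the truncated time $(\tau+\ell)\wedge\sigma$ with $\ell=\de/\ga^2$ and then use Markov's inequality on the nonnegative random variable $|X_{(\tau+\ell)\wedge\sigma}-X_\tau|^2$, exactly as in the displayed computation in the proof of Claim~\ref{cl:cl1}.
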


\begin{proof}
The proof is done analogously to the proof of Claim \ref{cl:cl1} below. 
\end{proof}

Hence the expected number of steps is at least
$$
\frac{\de}{2}\cdot \left(\frac{1}{8 n^{2/\al-1}}\right)^2=\Om(n^{2-4/\al}),
$$
which completes the proof of the lower bound for Theorem \ref{thm:main} in the case when $\al>2$. 

\subsection{Proof of the lower bound in the case $\alpha=2$}
\label{sec:lb2}

We will now give an example of a two dimensional domain $\Om$ such 
that the expected convergence time of the WoS to a $O(1/n)$-neighborhood 
of $\partial \Om$ is $\Om(\log^2 n)$.
 By taking the $d$-dimensional domain $\Om_d = \Om \times \RR^{d-2}$
for $d>2$, we obtain a lower bound of $\Om(\log^2 n)$ for $2$-thick domains 
in $\RR^d$, proving the lower bound for $\al=2$ in  Theorem \ref{thm:main}. 

The domain $\Om$ will consist of the unit disc in $\RR^2$ with $O(\log n)$ holes
``poked" out of it in a grid formation. More specifically, let $\ga=4/\log^{1/2} n$. We consider the grid 
$\Ga=\ga\ZZ \times \ga\ZZ \subset \RR^2$. We take $\Om$ to be the unit disc with points from 
$\Ga$ removed from the ``middle third" annulus of the disc. 
\begin{equation}
\label{Om2def}
\Om = B(0,1) \setminus \bigl(\left(B(0,2/3)\setminus B(0,1/3)\right) \cap \Ga\bigr). 
\end{equation}
The set $\Om$ is illustrated on Fig. \ref{fig:Om2}(a). 

We will show that a WoS originated at the origin $X_0=0$ would require an expected time of $\Om(\log^2 n)$
to converge. It is immediate to see that the same lower bound holds for any point $X_0 \in B(0,1/3)$. We first 
observe the following:

\begin{claim}
\label{cl:lb4}
 With probability at least $7/8$, a WoS originated at $X_0=0$ that runs until 
$d(X_t)<1/n$  terminates near the unit circle (and not near one of the holes). 
\end{claim}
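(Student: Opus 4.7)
The strategy mirrors the proof of Claim \ref{cl:lb2}: I would construct a non-negative harmonic function $\Phi$ on $\Om$ whose value at the origin is small compared to the value it must take whenever the WoS terminates in the $1/n$-neighborhood of a hole, and then combine the optional stopping theorem with Markov's inequality.

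Let $P$ denote the finite set of holes, i.e. $P := \Ga \cap (B(0,2/3) \setminus B(0,1/3))$. Since $\ga = 4/\log^{1/2} n$, a trivial area comparison gives $|P| \le (\text{area of annulus})/\ga^2 + O(1/\ga) \le (\pi/3)\log n/16 + o(\log n)$. Because we are in two dimensions, the relevant Newton kernel is $\log(1/|x-y|)$; I would therefore define
$$\Phi(x) := \sum_{y \in P} \log\frac{2}{|x-y|}.$$
Each summand is harmonic off $P \subset \Om^c$, so $\Phi$ is harmonic in $\Om$. Moreover $|x - y| \le 2$ for every $x \in B(0,1)$ and every $y \in P$, so every summand is non-negative on the closure of $\Om$, and $\Phi \ge 0$ throughout the run of the WoS.

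I would then estimate the two quantities needed. At the origin $|y| \ge 1/3$, so $\log(2/|y|) \le \log 6$ and
$$\Phi(0) \le |P|\, \log 6 \le \tfrac{\pi \log 6}{48}\, \log n\,(1+o(1)).$$
At the stopping point $X_T$, if $d(X_T, S) < 1/n$ then $|X_T - y_0| < 1/n$ for some $y_0 \in P$, whence $\log(2/|X_T - y_0|) > \log(2n)$; the remaining summands contribute $\ge 0$, and therefore $\Phi(X_T) > \log(2n)$. Since $\Phi$ is harmonic, bounded on the (bounded) orbit and non-negative, the optional stopping theorem gives $\EE[\Phi(X_T)] = \Phi(0)$, and Markov's inequality yields
$$\PP\bigl[\text{WoS terminates near a hole}\bigr] \le \frac{\Phi(0)}{\log(2n)} \le \frac{\pi \log 6}{48}(1+o(1)).$$
The only real obstacle is a numerical one: confirming that the constant on the right-hand side is genuinely less than $1/8$ for the chosen $\ga = 4/\log^{1/2} n$. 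Since $\pi \log 6 / 48 \approx 0.117 < 0.125$, this is satisfied for large $n$; for small $n$ the claim is vacuous or can be absorbed by adjusting the leading constant in $\ga$. If a cleaner cushion is desired, I would replace $\log(2/|x-y|)$ by the Green's function $G_{B(0,1)}(x,y) = \log|1-\bar y x|/|x-y|$ of the unit disc, which vanishes on $\partial B(0,1)$ and satisfies $G_{B(0,1)}(0,y) = \log(1/|y|) \le \log 3$, improving the numerator by a further factor of $\log 3/\log 6$ while still forcing $\Phi(X_T) \gtrsim \log n$ near any hole.
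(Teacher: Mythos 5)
Your proposal is correct and is essentially the same as the paper's proof: the paper defines the very same harmonic sum $\Phi(z) = \sum_i \log(2/|z-a_i|)$ over the holes, bounds $\Phi(0) < k\log 6 < 2/\ga^2 = (\log n)/8$, observes $\Phi > \log n$ near any hole, and closes via the martingale property and Markov's inequality. Your version just makes the area-count of $|P|$ and the resulting constant $\pi\log 6/48 \approx 0.117 < 1/8$ explicit, and the Green's-function refinement you mention is a nice optional cushion but not needed.
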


\begin{proof}
Let 
$\{ a_i \}_{i=1}^k =B(0,1)\setminus \Om$ be the set of holes in $\Om$. Define the harmonic function 
$$
\Phi(z) = \sum_{i=1}^k \log(2/|z-a_i|). 
$$
It is clear the $\Phi(z)>0$ for all $z\in B(0,1)$. 
For any point $u$ in the $1/n$-neighborhood of any of the holes, $\Phi(u)>\log n$. On the other 
hand,
$$
\Phi(0) < k \cdot \log 6 < 2/\ga^2 = (\log n)/8. 
$$
If $X_t$ is the WoS process with $X_0=0$ terminated at time $T$ when $d(X_T,\partial\Om)<1/n$, then $\Phi(X_t)$
is a martingale. Hence,
$$
(\log n)/8 > \Phi(X_0) = \EE[ \Phi(X_t)] > \PP[\text{$X_t$ near a hole}]\cdot \log n. 
$$
Hence the probability that the WoS terminates near a hole is less than $1/8$. 
\end{proof}

 For simplicity, we 
will assume that at every step fo the process the WoS jumps exactly half way to the boundary $\partial \Om$.

To facilitate the analysis we replace the WoS process $X_t$ on $\Om$ with the following process $Y_t$. 
It evolves in exactly the same fashion as $X_t$, except when $Y_t$ is closer than $1/n$ to one of the holes 
in $\Om$. In this case, instead of terminating, the process makes a jump of $1/n$ in a direction selected
uniformly at random. The process $Y_t$ is guaranteed to terminate near the unit circle. We denote the termination 
time by $T$. Further, we set $Y_t=Y_T$ for $t>T$. Note that if the process $X_t$ does not terminate near one 
of the holes, then the process $Y_t$ coincides with $X_t$. Claim \ref{cl:lb4} implies that this happens with probability 
at least $7/8$:

\begin{claim}
\label{cl:cl0}
$\PP[\text{$X_t$ does not coinside with $Y_t$}]<1/8$.
\end{claim}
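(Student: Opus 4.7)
The plan is to construct an explicit coupling between the two processes $X_t$ and $Y_t$ so that the event of non-coincidence is precisely the event that $X_t$ terminates near a hole, and then invoke Claim \ref{cl:lb4}.

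First I would define the coupling as follows. Both processes are initialized at $Y_0=X_0=0$, and they use the same random directions $\{\ga_t\}$ on each step. As long as $Y_t$ is not within $1/n$ of any hole, its jump rule agrees with that of the original WoS on $\Omega$, so we may set $X_t=Y_t$ for all times $t$ up to the first moment $\tau$ at which either process enters the $1/n$-neighborhood of a hole. If no such $\tau$ exists, then $X_t$ and $Y_t$ terminate simultaneously near the unit circle at the same point, and they coincide for all $t$. If $\tau$ does occur, then $X_\tau$ is a termination point for the process $X$ (so $X_t = X_\tau$ for all $t\ge\tau$), while $Y_t$ continues by making a jump of length $1/n$ in a uniformly random direction and then resuming WoS; in this case the processes diverge at time $\tau$.

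Under this coupling, the event $\{\exists\, t: X_t\ne Y_t\}$ equals the event that the original WoS process terminates within distance $1/n$ of some hole $a_i\in B(0,1)\setminus\Omega$. By Claim \ref{cl:lb4}, the WoS terminates near the unit circle with probability at least $7/8$; equivalently, the probability that it terminates near a hole is strictly less than $1/8$. (The strict inequality is visible in the proof of Claim \ref{cl:lb4}, where the martingale bound yields $\PP[\text{$X_T$ near a hole}]<1/8$.) Combining these two observations gives the claim.

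There is no real obstacle here: the statement is essentially a reformulation of Claim \ref{cl:lb4}, and the only content lies in being explicit about the coupling so that the equality of events $\{X_t\not\equiv Y_t\}=\{X_T\text{ terminates near a hole}\}$ is unambiguous.
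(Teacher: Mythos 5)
Your proof is correct and takes essentially the same approach as the paper, which treats the claim as immediate from Claim \ref{cl:lb4} via the observation that $Y_t$ coincides with $X_t$ unless $X_t$ terminates near a hole. Your explicit description of the coupling (same driving randomness, divergence only at the first time either process is within $1/n$ of a hole) is a clean formalization of what the paper leaves implicit in the definition of $Y_t$, and your note on the strictness of the inequality is accurate.
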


\begin{figure}[ht] 
\begin{center} \includegraphics[angle=0,scale=.75]{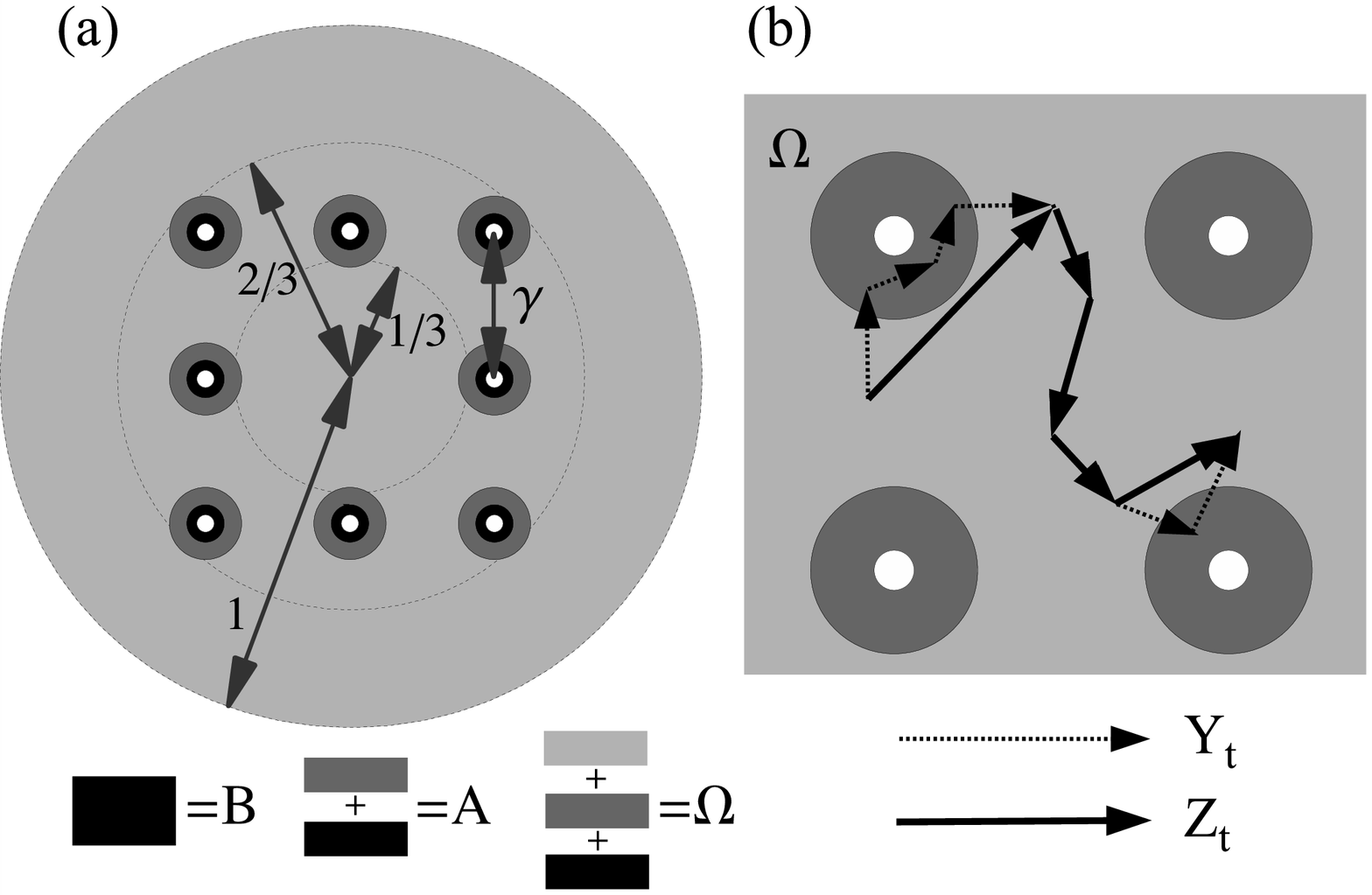}
\caption{An illustration the sets $\Om$, $A$ and $B$ (a), and a possible 
sequence of jumps in the processes $\{Y_t\}$ and $\{Z_t\}$ (b)}
\label{fig:Om2}
\end{center}
\end{figure}

We define two regions $A$ and $B$, $B \subset A\subset\Om$. We take $A$ to be the union of discs with radius $r = \ga/4$ 
around the holes in $\Om$. We take $B$ to be the union of discs with radius $r/2$ around the same holes. 
The sets $\Om$, $A$ and $B$ are illustrated on Fig. \ref{fig:Om2}(a). 

 Let time $t_0$ be the first time with $|Y_t|>1/2$. Let $t'$ be the first time afterward with either
 $|Y_t|>2/3$ or $|Y_t|<1/3$. Our goal is to show that with probability at least $3/4$, $|t_0-t'|=\Om(\log^2 n)$. 
 We define a subprocess $Z_t$ of $Y_t$ as follows. Let $\{s_i\}_{i=0}^k$ be a subsequence of times $s$
 between $t_0$ and $t'$ such that $Y_s \notin A$. We set $Z_i = Y_{s_i}$. We further define $\De_i = Z_{i}-Z_{i-1}$. 
  An instance of the process $Z_i$ is illustrated on Fig. \ref{fig:Om2}(b).
  Since $Y_t$ is a martingale, and $Z_i$ is defined by a stopping rule on $Y_t$, $Z_i$ is also a martingale, and 
  \begin{equation}
  \label{eq:mart1}
  \EE[\De_i~|~\De_1, \De_2, \ldots, \De_{i-1}]=0. 
  \end{equation}
     In addition, it is not hard to see from the definition of $Y_t$ that $|\De_i|<4/\log^{1/2}n$ for all $i$. 
     Our first claim is that the number $k$ of steps $Z_i$ is $\Om(\log n)$.
     
   \begin{claim}
   \label{cl:cl1}
   $ \PP[k<10^{-4} \log n] < 1/8$.
   \end{claim}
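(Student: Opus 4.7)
The plan is to apply Azuma's inequality to the martingale $Z_i$, exploiting the uniform bound $|\De_i| \leq \ga := 4/(\log n)^{1/2}$ on its increments. Since $Z_i$ is a subsampling of the martingale $Y_t$ at the stopping times $s_i$, it is itself a martingale (extend it by $Z_i := Z_k$ for $i > k$). The process stops when $|Z_i|$ escapes the interval $(1/3, 2/3)$. If $|Z_0|$ stays close to $1/2$, this requires a drift of roughly $1/6$ from $Z_0$, and by Azuma such a drift is unlikely in fewer than $\Om(1/\ga^2) = \Om(\log n)$ steps.

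The main geometric step is to verify $|Z_0| \leq 1/2 + \ga$. The predecessor $Y_{t_0-1}$ must lie in the annulus $\{1/3 \leq |z| \leq 1/2\}$: if instead $|Y_{t_0-1}| < 1/3$, the grid structure forces the nearest hole to sit within distance $(1/3 - |Y_{t_0-1}|) + O(\ga)$ of $Y_{t_0-1}$, so the WoS step is bounded by half of this. A direct computation then gives $|Y_{t_0}| \leq |Y_{t_0-1}|/2 + 1/6 + O(\ga) \leq 1/3 + O(\ga)$, contradicting $|Y_{t_0}| > 1/2$ for large $n$. Hence $Y_{t_0-1}$ lies in the annulus, where every point is within $\ga/\sqrt{2}$ of some hole, so the next WoS jump has magnitude at most $\ga/\sqrt{2}$ and $|Z_0| = |Y_{t_0}| \leq 1/2 + \ga$.

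For $K := 10^{-4}\log n$, the event $\{k < K\}$ forces $\max_{i \leq K}|Z_i - Z_0| \geq 1/6 - O(\ga) \geq 1/7$ for large $n$, since on that event $|Z_k| \leq 1/3 + O(\ga)$ or $|Z_k| \geq 2/3$. At least one of the two coordinates of $Z_i - Z_0$ must then deviate by $\geq 1/(7\sqrt{2})$. Applying Azuma's maximal inequality coordinatewise (each coordinate being a scalar martingale with increments bounded by $\ga$) gives
$$\PP[k < K] \;\leq\; 4\exp\!\left(-\frac{c}{K\ga^2}\right)$$
for an absolute constant $c > 0$. Since $K\ga^2 = 16 \cdot 10^{-4}$, the right-hand side is a small absolute constant well below $1/8$. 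The only nontrivial step is the geometric bound on $|Z_0|$; once it is in place the Azuma calculation is routine.
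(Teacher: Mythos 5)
Your Azuma-based route is a valid alternative to the paper's argument, and you correctly identify and handle the geometric step (bounding $|Z_0|$ near $1/2$), which the paper glosses over when it asserts $|Z_0-Z_k|>1/6$ without comment. The structural difference is this: the paper stops $Z$ at time $\min(\ell,k)$, so it needs no maximal inequality at all; it computes $\EE[|Z_\ell-Z_0|^2]=\sum_j\EE[|\De_j|^2]<16\cdot10^{-4}$ by orthogonality of martingale increments and then applies Markov to $|Z_\ell-Z_0|^2$. Your approach reaches the same conclusion via a concentration inequality rather than a second moment, which is fine in spirit but costlier in constants.

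And the constants actually bite here. With deviation threshold $1/7$, the coordinatewise split costs a factor $4$ in front and a factor $\sqrt{2}$ in the threshold, so your bound is
$$4\exp\!\left(-\frac{(1/(7\sqrt2))^2}{2\cdot K\ga^2}\right)=4\exp\!\left(-\frac{10^4}{98\cdot 32}\right)\approx 4e^{-3.19}\approx 0.165,$$
which is \emph{larger} than $1/8$, not ``well below'' it. The fix is easy: for $n$ large the true deviation is $1/6-O(\ga)$, and taking the threshold close to $1/6$ gives $4\exp(-10^4/(72\cdot32))\approx 4e^{-4.34}\approx 0.05<1/8$; but as written your claim is numerically false. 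Note that the paper's Markov route has more slack precisely because it avoids both the coordinate split and the exponential tail constants: $\PP[|Z_\ell-Z_0|\geq 1/7]\leq (16\cdot10^{-4})\cdot 49\approx 0.078<1/8$ works even with your conservative $1/7$. So the approach is sound, but you should either sharpen the threshold to $1/6-o(1)$ or swap Azuma for the second-moment bound you are in any case implicitly computing.
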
 
    
   \begin{proof}
   Denote $\ell = 10^{-4} \log n$. Then, by \eqref{eq:mart1},
   \begin{multline*}
   \EE[(Z_0-Z_\ell)^2] = \EE[(\De_1+\De_2+\ldots+\De_\ell)^2] = \sum_{j=1}^\ell \EE[\De_j^2] + 
   \sum_{1\le i<j\le \ell} \EE[\De_i \De_j] = \\
   \sum_{j=1}^\ell \EE[\De_j^2] + 
   \sum_{1\le i<j\le \ell} \EE[\De_i \cdot\EE[\De_j|\De_i]] = 
    \sum_{j=1}^\ell \EE[\De_j^2]  < \ell \cdot 16/ \log n< 1/288.
   \end{multline*}
   On the other hand, by definition, $|Z_0-Z_k|>1/6$, and $(Z_0-Z_k)^2>1/36$. Thus, 
   $$
   \PP[k\le \ell] = \PP[Z_\ell=Z_k] < (1/288)/(1/36) = 1/8.
   $$   \end{proof}

Thus the number of steps the process $Z_t$ takes is at least $10^{-4} \log n$ w.p. $>7/8$. The
process $Y_t$ consists of the steps of the process $Z_t$ plus, in addition, steps the process takes 
within the region $A$. We claim that once the process $Y_t$ enters the region $A$, it is expected 
to spend $\Om(\log n)$ steps there. Moreover, the following holds.

\begin{claim}
\label{cl:cl2}
Let $\eta>2$. Then there is a $\theta>0$ such that whenever $Y_t \in A$, if $s>t$ is the first time, 
conditioned on $Y_t$ such that $Y_s\notin A$, then 
\begin{equation}
\PP[s-t>\theta \log^2 n] > \eta/\log n,
\end{equation}
for sufficiently large $n$. 
\end{claim}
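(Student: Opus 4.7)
Let $a$ denote the hole of $\Om$ whose $\ga/4$-disc contains $Y_t$, set $R_s := |Y_s - a|$ for $s \geq t$, and let $S_s := \log(\ga/4) - \log R_s$. I reduce Claim~\ref{cl:cl2} to a survival estimate for the one-dimensional process $(S_s)$; the key analytic fact is that $\log|z|$ is harmonic on $\RR^2 \setminus \{0\}$.

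\textbf{Reduction to a 1D martingale.} While $Y_s$ is in the disc around $a$, every other hole is at distance $\geq 3\ga/4$ from $Y_s$ (and the outer circle farther still), so $a$ is the nearest boundary point and the WoS takes a jump of length exactly $R_s/2$: $Y_{s+1} = Y_s + (R_s/2)\gamma_s$ with $\gamma_s$ uniform on the unit circle. The law of cosines gives $R_{s+1}^2 = R_s^2 (5/4 + \cos \theta_s)$, so $R_{s+1}/R_s \in [1/2, 3/2]$. Because the jumping circle $\partial B(Y_s, R_s/2)$ avoids $a$, the mean-value property of $\log|z|$ gives $\EE[\log R_{s+1} \mid R_s] = \log R_s$; Jensen's formula then shows that the per-step variance of the i.i.d. increments $\log R_{s+1} - \log R_s$ is a strictly positive universal constant $\sigma^2$. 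Exit of $Y$ from $A$ coincides with $\{S_s \leq 0\}$, and while $Y_t$ may lie anywhere in $A$ the worst case for survival is $S_t \in [0, \log 2]$, coming from the entering condition $R_{t-1} > \ga/4$ which forces $R_t \geq \ga/8$; for $S_t > \log 2$ the bound we aim to prove only improves.

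\textbf{Bouncing as soft reflection.} When $R_s < 1/n$, the modified process makes a $1/n$-jump in a uniform direction. Jensen's formula $\tfrac{1}{2\pi}\int_0^{2\pi}\log|v + r e^{i\theta}|\,d\theta = \log r$ for $|v| \leq r$ yields $\EE[\log R_{s+1} \mid R_s] = \log(1/n)$ in this regime, so $(S_s)$ is a nonnegative supermartingale globally. With $L := \log(n\ga/4) = \log n - O(\log\log n)$, Doob's maximal inequality gives
\begin{equation*}
\PP\!\left[\max_{t \leq s \leq t+T} S_s \geq L \right] \leq \frac{S_t}{L} = O\!\left(\frac{1}{\log n}\right).
\end{equation*}
So, up to an $O(1/\log n)$ error, we may assume $(S_s)$ is a pure martingale that never enters the bouncing regime on the time window of interest.

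\textbf{Diffusive survival estimate and conclusion.} Since the increment $S_{s+1} - S_s$ has a distribution independent of $S_s$, with positive universal probability $p_0 > 0$ the first step gives $S_{t+1} \geq c_0$ for some universal $c_0 > 0$. Conditioned on this, Donsker's invariance principle compares $(S_s)_{s\geq t+1}$ to a Brownian motion of diffusivity $\sigma$ on $[0, \infty)$ started at $c_0$ and absorbed at $0$; its survival probability at time $T$ equals $2\Phi(c_0/(\sigma \sqrt T)) - 1 \sim c_0 \sqrt{2/\pi}/(\sigma \sqrt T)$ as $c_0/\sqrt T \to 0$. Setting $T = \theta \log^2 n$ and combining with the bouncing correction gives
\begin{equation*}
\PP[\, s-t > \theta \log^2 n \,] \geq \frac{p_0 c_0 \sqrt{2/\pi}}{\sigma \sqrt\theta\, \log n}(1+o(1)) - O\!\left(\frac{1}{\log n}\right),
\end{equation*}
which exceeds $\eta/\log n$ for all large $n$ provided $\theta$ is taken sufficiently small in terms of $\eta, p_0, c_0, \sigma$. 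The main obstacle is making the Brownian approximation rigorous in the discrete setting and handling the overshoot at the absorbing boundary; the cleanest route appears to be to stop $(S_s)$ at the first bouncing time and apply a quantitative martingale invariance principle (e.g., a martingale Berry--Esseen bound) on $[0, L]$, with a separate gambler's-ruin-style gluing for the post-bounce part of the trajectory.
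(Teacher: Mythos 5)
Your potential $S_s=\log(\gamma/4)-\log R_s$ is exactly the paper's $\Phi(Y_{t+s})$, and you have correctly identified the diffusive time scale, so the underlying idea is the same. The difference is in how the survival estimate is obtained, and here your route has a real gap that you yourself flag. After one step you sit at $S_{t+1}\geq c_0$ for a constant $c_0$, but the relevant diffusive scale is $\sqrt{T}\sim\sqrt{\theta}\,\log n$; after the Brownian rescaling implicit in Donsker, the starting point collapses onto the absorbing boundary, and the limiting survival probability is $0$. So the asymptotic $\sim c_0/(\sigma\sqrt T)$ is a heuristic about the \emph{rate} of that collapse, not a consequence of weak convergence, and a ``quantitative invariance principle'' does not by itself recover it. The clean discrete substitute is precisely the two-phase argument the paper uses: first an optional-stopping estimate on $\Phi$ showing the walk reaches an intermediate potential level $\approx (p/5\eta)\log n$ with probability $\gtrsim \eta/\log n$; then a quadratic-variation/Chebyshev bound showing that from that level the exit time exceeds $\theta\log^2 n$ with constant probability. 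Your gambler's-ruin gluing suggestion is exactly this decomposition, so a rigorous version of your proof would essentially become the paper's proof. One more simplification you are missing: the paper stops the phase-two martingale at the first of \{exit $A$\} or \{hit distance $1/n$\}, which is dominated by the true exit time, so it never has to estimate the bouncing probability at all; your Doob supermartingale bound for the bouncing regime is correct but unnecessary. Your observation that $\log R_{s+1}-\log R_s$ is genuinely i.i.d.\ (with law $-\tfrac12\log(5/4+\cos\theta)$) away from bouncing is a nice extra structural remark, though the argument only needs bounded increments with zero conditional mean.
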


\begin{proof}
Denote the hole in $\Om$ that is closest to $Y_t$ by $x$. 
Given that $Y_{t}\in A$, there is some fixed probability $p>0$ that $Y_{t+1}\in B$. 
In other words, $|Y_{t+1}-x|<r/2=\ga/8$. Consider the harmonic function 
$$
\Phi(z) = \log (r/|x-z|). 
$$
Let $t'>t+1$ be the first time such that either $Y_{t'}\notin A$ (and thus $t'=s$), or 
$|Y_{t'} - x|< n^{-p/(5\eta)}$. If $Y_{t'}\notin A$, then $\Phi(Y_{t'})<0$. In the other case, 
$\Phi(Y_{t'})< p \log n / (4 \eta)$. Since $t'$ is a stopping time, the optional stopping time theorem applied to the martingale $\Phi(Y_{t+\tau})$ combined with the estimate $\Phi(Y_t)>1/2$, 
gives
$$
\PP[|Y_{t'} - x|< n^{-p/(5 \eta)}] > (1/2)/(p \log n / (4 \eta)) > 2 \eta/(p \log n).
$$
To complete the argument, we claim that assuming $|Y_{t'} - x|< n^{-p/(5 \al)}$, it will take 
the process another $\Om(\log^2 n)$ steps to escape $A$ with probability at least $1/2$. We consider
the process $\phi_\tau = \Phi(Y_{t'+\tau})$ stopped at time $\tau_0$ when either $Y_{t'+\tau_0}$ escapes
$A$, or gets closer than distance $1/n$ from $x$. $\phi_\tau$ is a martingale.
Moreover, it is not hard to see that $|\phi_0 - \phi_{\tau_0}| > p \log n / (6\eta)$, and $|\phi_i - \phi_{i+1}|<1$ for 
all $i$. These two facts imply that 
$$
\EE[\tau_0] > \sum_{i=1}^{\tau_0}\EE[(\phi_i-\phi_{i-1})^2]=\EE[(\phi_{\tau_0}-\phi_0)^2]>(p \log n / (6 \eta))^2 = p^2 \log^2 n / (36 \eta^2). 
$$
Tschebyshev inequality implies   
that $\theta = p^2/(72 \eta^2)$ satisfies the statement of the claim. 
\end{proof}

By Claim \ref{cl:cl1} we know that except with probability $<1/8$ the walk will contain at least 
$\Om(\log n)$ visits to $A$. It remains to use Claim \ref{cl:cl2} to show that at least one of these
stays must be $\Om(\log^2 n)$ long. Recall that $T$ is the stopping time of the process $Y_T$, and $k$ is
the number of steps $Y_t$ takes outside of $A$. 

\begin{claim}
\label{cl:cl3}
Let $\al_1 = 10^{-4}$ from Claim \ref{cl:cl1}. There is a constant $\al_2>0$ such that 
\begin{equation}
\PP[k>\al_1 \log n \text{ and } T<\al_2 \log^2 n] < 1/8.
\end{equation}
\end{claim}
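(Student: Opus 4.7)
The strategy is to combine the lower bound on $k$ from Claim \ref{cl:cl1} with the one-visit estimate of Claim \ref{cl:cl2}: when $k$ is large, the process $Y_t$ must enter the region $A$ many times, and with high probability at least one of those visits is ``long'', lasting $\Om(\log^2 n)$ steps, which alone forces $T \geq \al_2 \log^2 n$.

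First, let $M$ denote the number of distinct excursions of $Y_t$ into $A$ during $[t_0, t']$. I will show that for some constant $c>0$, conditional on $k>\al_1 \log n$, we have $M \geq c \log n$ with probability at least $15/16$. The idea is that for each $Z$-step, the probability that $Y$ enters $A$ during the transition from $Z_i$ to $Z_{i+1}$ is bounded below by some constant $q>0$. Geometrically, the $A$-discs of radius $\ga/4$ sit on a $\ga$-grid and thus occupy a positive fraction of the middle annulus; starting from any $Z_i$ outside $A$ in that annulus, a local analysis of the $Y$-jump distribution (possibly extended to the next few steps before the walk reaches $Z_{i+1}$) gives a uniform lower bound $q$ on the probability of entering $A$. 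Treating these per-step events as conditionally Bernoulli$(q)$ given the history and applying a Chernoff-type bound gives $M \geq qk/2 \geq (q\al_1/2)\log n$ except on an event of probability at most $1/16$.

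Second, conditional on $M \geq c \log n$, apply Claim \ref{cl:cl2} with $\eta$ chosen sufficiently large (say $\eta = 3$, the corresponding $\theta$ then fixed). Conditioning on the history up to the start of the $i$-th visit, the probability that visit $i$ has length at least $\theta \log^2 n$ is at least $\eta/\log n$. Consequently the probability that \emph{no} visit achieves this length is at most
\[
\left(1-\frac{\eta}{\log n}\right)^{c\log n} \leq e^{-c\eta},
\]
which is at most $1/16$ for a suitable choice of constants. Any single long visit contributes $\geq \theta \log^2 n$ steps to $T$, so $T\geq \theta \log^2 n$; setting $\al_2 := \theta$ and combining the two $1/16$ error probabilities gives the claimed bound $\PP[k > \al_1 \log n \text{ and } T < \al_2 \log^2 n] < 1/8$.

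The main obstacle is the first step: rigorously producing the uniform lower bound $q>0$ on the per-$Z$-step probability of entering $A$. The step size of $Y$ varies with distance to the nearest hole, so a walker sitting in the ``middle'' of a grid cell (at distance close to $\ga/\sqrt{2}$ from any hole) cannot jump into $A$ in one step; one must argue that after a bounded number of $Y$-steps outside $A$ the walker drifts close enough to some hole with positive probability, and then exploit the $\Theta(1)$ solid angle subtended by the $A$-disc from such a position. Once this lower bound is in place, the Chernoff domination and the union bound in the two steps above are routine.
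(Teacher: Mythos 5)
Your approach is essentially the paper's: use the $\Om(\log n)$ excursions of the $Z$-process to force $\Om(\log n)$ opportunities to enter $A$, then invoke Claim~\ref{cl:cl2} so that each entry has a $\Theta(1/\log n)$ chance of lasting $\Om(\log^2 n)$ steps, and conclude that the probability no visit is long is exponentially small. The one organizational difference is that you explicitly bound an intermediate quantity $M$ (the number of excursions into $A$) via a Chernoff step, whereas the paper folds the two ingredients together and works directly, at each of the $k$ indices $t$ with $Y_t\notin A$ in the annulus, with the combined event ``$Y_{t+1}\in A$ or $Y_{t+2}\in A$ and the ensuing stay in $A$ exceeds $\al_2\log^2 n$'', whose probability is at least $p_1p_2\sim 1/\log n$; a direct product bound over every other such $t$ then gives $(1-6/(\al_1\log n))^{k/2}<e^{-3}<1/8$. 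This avoids ever having to lower-bound $M$ separately.

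The ``main obstacle'' you flag is real, and the paper resolves it exactly the way you suggest: it looks \emph{two} $Y$-steps ahead (``$Y_{t+1}\in A$ or $Y_{t+2}\in A$ with probability $p_1>0$''), since from the center of a cell a single step of size $\approx\ga/(2\sqrt2)$ cannot reach an $A$-disc of radius $\ga/4$, but two consecutive steps aimed roughly toward a hole do, with uniformly positive probability. If you keep your formulation, you should run the Bernoulli domination and Chernoff bound over disjoint blocks of a bounded number of consecutive $Z$-steps (so every block has a uniform positive probability of at least one $A$-entry), rather than over individual $Z$-steps, some of which have zero one-step entry probability. Also, fixing $\eta=3$ is premature: your second-stage failure probability is $(1-\eta/\log n)^{c\log n}\le e^{-c\eta}$, and $c$ may be small, so $\eta$ must be chosen \emph{after} $c$ so that $e^{-c\eta}<1/16$; this is harmless since Claim~\ref{cl:cl2} supplies a $\theta(\eta)$ for every $\eta>2$, and the paper makes the analogous choice when it sets $p_2=6/(\al_1 p_1\log n)$.
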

 
 \begin{proof}
For every $t$ such that $1/3<|Y_t|<2/3$ and $Y_t\notin A$, there is a probability $p_1>0$ such that 
either $Y_{t+1}\in A$ or $Y_{t+2}\in A$. By Claim \ref{cl:cl2} we can choose $\al_2>0$ such 
that whenever $Y_{t'}\in A$, the process $Y_{t'+\tau}$ does not escape $A$ for at least $\al_2 \log^2 n$
 with probability at least $p_2 = 6/(\al_1 p_1 \log n)$. Hence for each $1/3<|Y_t|<2/3$ with $Y_t\notin A$, the probability 
 that $Y_{t+1}$ or $Y_{t+2}$ enters $A$, and stays there for at least $\al_2 \log^2 n$ steps is at least $p_1\cdot p_2 = 6/(\al_1 \log n)$. 
 Since there are at least $k = \al_1 \log n$ $Y_t$'s satisfying $1/3<|Y_t|<2/3$ and $Y_t\notin A$, the probability that 
 for neither one of them does $Y_{t+1}$ or $Y_{t+2}$ enter $A$, and stay there for at least $\al_2 \log^2 n$ steps is at most 
 $$
 (1-6/(\al_2\log n))^{k/2} < (1-6/(\al_1\log n))^{\al_1 \log n/2} < e^{-(6/(\al_1\log n))\cdot(\al_1 \log n/2)}=e^{-3}<1/8.
 $$
\end{proof}

 Claims \ref{cl:cl0}, \ref{cl:cl1} and \ref{cl:cl3} imply the following. 

\begin{claim}
\label{thm:32main}
Let $X_t$ be the WoS process on the set $\Om$ with $X_0=0$. Let $T'$ be its termination time. Then 
$$
\PP[T'>\al_2 \log^2 n]>5/8,
$$
where $\al_2>0$ is the constant from Claim \ref{cl:cl3}. In particular, this implies that $\EE[T']=\Om(\log^2 n)$. 
\end{claim}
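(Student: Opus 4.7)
The plan is a direct union bound that combines the three preceding claims, which have been set up precisely to yield this conclusion. First, I would observe that on the event where $X_t$ does not terminate near one of the holes in $\Om$, the coupling built into the construction gives $X_t = Y_t$ for every $t$, so in particular the termination times agree: $T' = T$. By Claim \ref{cl:cl0}, this coupling fails with probability at most $1/8$.

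Next, I would apply Claim \ref{cl:cl1} to ensure that the number of ``outside--$A$'' steps satisfies $k \geq \al_1 \log n$, except on an event of probability at most $1/8$. Conditional on $k \geq \al_1 \log n$, Claim \ref{cl:cl3} provides a constant $\al_2 > 0$ such that the $Y$-termination time satisfies $T \geq \al_2 \log^2 n$, except on an event of probability at most $1/8$. A union bound over the three bad events gives a total exceptional probability of at most $3/8$. On the complementary good event, which has probability at least $5/8$, we have simultaneously $X_t = Y_t$, $k \geq \al_1 \log n$, and $T \geq \al_2 \log^2 n$. Since $T' = T$ on this event, we conclude that $\PP[T' \geq \al_2 \log^2 n] \geq 5/8$.

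For the expectation bound, I would simply apply the trivial inequality $\EE[T'] \geq \al_2 \log^2 n \cdot \PP[T' \geq \al_2 \log^2 n] \geq (5/8)\al_2 \log^2 n$, which is $\Om(\log^2 n)$ as required. There is essentially no obstacle at this stage: all the genuine work has been done in the preceding claims, which were tailored so that a three-event union bound suffices. The only point requiring a brief verification is that the coupling between $X_t$ and $Y_t$ really does allow transferring the lower bound on $T$ to the corresponding bound on $T'$, which follows immediately from the definition of $Y_t$ as a modification of $X_t$ that differs only upon attempting to terminate near one of the holes.
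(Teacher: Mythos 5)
Your proof is correct and follows essentially the same three-event union-bound argument as the paper: couple $X_t$ with $Y_t$ (Claim~\ref{cl:cl0}), ensure enough steps outside $A$ (Claim~\ref{cl:cl1}), and ensure a long stay (Claim~\ref{cl:cl3}), each failing with probability at most $1/8$. The only tiny imprecision is your phrasing ``conditional on $k\geq\al_1\log n$''---Claim~\ref{cl:cl3} gives the \emph{joint} probability $\PP[k>\al_1\log n \text{ and } T<\al_2\log^2 n]<1/8$ rather than a conditional one, but this is exactly what the union bound needs, so the argument goes through unchanged.
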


 \begin{proof}
We know that $T'>\al_2 \log^2 n$ if the following three conditions hold: (C1) the process $X_t$ coincides with 
the process $Y_t$; (C2) the process $Y_t$ makes at least $k>\al_1 \log n$ steps outside of $A$ in the $\{z:1/3<|z|<2/3\}$ annulus;
and (C3) the stopping time $T$ of $Y_t$ satisfies $T>\al_2 \log^2 n$. In fact conditions (C1) and (C3) suffice. 
We have $\PP[\overline{C1}]<1/8$ by Claim \ref{cl:cl0}, $\PP[\overline{C2}]<1/8$ by Claim \ref{cl:cl1}, and $\PP[C2\cap \overline{C3}]<1/8$ by Claim \ref{cl:cl3}. Here $\overline{C}$ denotes the complement of an event $C$. Hence 
$$
\PP[\overline{C1}\cup \overline{C2}\cup\overline{C3}] \le \PP[\overline{C1}] + \PP[\overline{C2}] + \PP[C2\cap\overline{C3}] < 3/8, 
$$
and
$$
\PP[T'>\al_2 \log^2 n]\ge \PP[C1\cap C2\cap C3] = 1- \PP[\overline{C1}\cup \overline{C2}\cup\overline{C3}] > 5/8. 
$$
\end{proof}

 Claim \ref{thm:32main} gives the lower bound for Theorem \ref{thm:main} in the case $\al=2$.

\bibliographystyle{alpha}
\def\cprime{$'$} \def\cprime{$'$}








\end{document}